\Crefname{equation}{}{}
\definecolor{codegreen}{rgb}{0,0.6,0}
\definecolor{codegray}{rgb}{0.5,0.5,0.5}
\definecolor{codepurple}{rgb}{0.58,0,0.82}
\definecolor{backcolour}{rgb}{0.95,0.95,0.92}
\lstdefinestyle{mystyle}{
    backgroundcolor=\color{backcolour},   
    commentstyle=\color{codegreen},
    keywordstyle=\color{magenta},
    numberstyle=\tiny\color{codegray},
    stringstyle=\color{codepurple},
    basicstyle=\ttfamily\footnotesize,
    breakatwhitespace=false,         
    breaklines=true,                 
    captionpos=b,                    
    keepspaces=true,                 
    numbers=left,                    
    numbersep=5pt,                  
    showspaces=false,                
    showstringspaces=false,
    showtabs=false,                  
    tabsize=2
}
\newcommand{\Q}{\mathbb{Q}}
\newcommand{\C}{\mathbb{C}}
\newcommand{\F}{\mathbb{F}}
\newcommand{\PP}{\mathbb{P}}
\newcommand{\Z}{\mathbb{Z}}
\newcommand{\Sy}{\mathcal{S}}
\DeclareMathOperator{\End}{\operatorname{End}}
\DeclareMathOperator{\Hom}{Hom}
\DeclareMathOperator{\Aut}{Aut}
\DeclareMathOperator{\Jac}{Jac}
\DeclareMathOperator{\Spec}{Spec}
\DeclareMathOperator{\Sym}{Sym}
\DeclareMathOperator{\Ind}{Ind}
\DeclareMathOperator{\pr}{pr}
\newcommand{\id}{id}
\DeclareMathOperator{\FinEtCov}{{FinEtCov}}
\numberwithin{equation}{section}
\newtheorem{conjecture}[equation]{Conjecture}
\newtheorem{corollary}[equation]{Corollary}
\newtheorem{lemma}[equation]{Lemma}
\newtheorem{proposition}[equation]{Proposition}
\newtheorem{theoremX}{Theorem}
\theoremstyle{definition}
\newtheorem{definition}[equation]{Definition}
\theoremstyle{remark}
\newtheorem{remark}[equation]{Remark}
\newtheorem{example}[equation]{Example}
\title[How to split two-dimensional Jacobians: a geometric construction]{
How to split two-dimensional Jacobians:\\ a geometric construction
}
\author{Andrea Gallese}
\email{andrea.gallese@sns.it}
\address{Affiliation: Scuola Normale Superiore, Pisa, Italy; Address: Piazza dei Cavalieri 7, Pisa, 56125, Italy; ORCID-id: 0009-0007-2105-3397}
\begin{document}

\begin{abstract}
    Let $\pi\colon Y \to X$ be a branched cover of algebraic curves. Assume that there exists a curve $W$ such that
    \[
    \Jac Y \sim \Jac X \times \Jac W.
    \]
    We conjecture that every such isogeny decomposition is induced by an algebraic correspondence of curves that fits in a Galois diagram, and we prove this conjecture when $g(Y)=2$ and $g(X)=1$. Our proof yields a geometric construction of the complementary curve $W$, an explicit correspondence inducing the isogeny, and a general criterion for deciding when an algebraic correspondence of curves fits in a Galois diagram (admits a push-out).
\end{abstract}

\maketitle
\let\thefootnote\relax\footnotetext{\emph{2020 Mathematics Subject Classification}: Primary 14H30, 14H40; Secondary 14K20, 14Q05, 37F20}
\let\thefootnote\relax\footnotetext{Keywords: split Jacobian, correspondence, Prym variety, monodromy, covering}


\section{Introduction}
Let $Y$ be an algebraic curve of genus $g \geq 2$ defined over an algebraically closed field $k$ of characteristic 0. The Jacobian variety $\Jac Y$ is an abelian variety of dimension $g$. We say that $\Jac Y$ is a \emph{split Jacobian} if it is isogenous to a product of elliptic curves.
Generically, the Jacobian of an algebraic curve does not split as the product of elliptic curves but is rather a simple abelian variety. Moreover, even when it decomposes up to isogeny as the product of simple abelian varieties of lower dimension, these are not necessarily one-dimensional.
Conversely, a product of $g$ elliptic curves is not necessarily isogenous to the Jacobian of a connected curve of genus $g$ \cite{Hayashida1, Hayashida2}.
Therefore, a split Jacobian carries interesting information.

In the present paper, we focus on the case of two-dimensional split Jacobians. 
The two-dimensional case is the easiest non-trivial case to study and, as such, it has been studied thoroughly \cite{Kuhn, MR2039100, MR1962943, magaard2012genus, MR3173190, MR3893776, MR4149056, konstantinou2023}. Nonetheless, it already presents interesting arithmetic applications.
For example, two-dimensional split Jacobians appear in several papers on the height conjecture for elliptic curves \cite{MR1085258, MR1363491, Frey} and in recent articles on the parity conjecture \cite{MR2551757, konstantinou2023}.
They serve as examples in an extensive body of work.

Previous work on the topic has focused on establishing a classification of split Jacobians (for example, by computing explicit models for the moduli space of Jacobians that split via maps of a fixed degree) or criteria to determine whether a given elliptic curve appears as a factor of a split Jacobian.
We pursue a different objective, in the spirit of \cite{lombardo2020decomposing}.

\subsection{The conjecture}
We reformulate the strategy of \cite{lombardo2020decomposing} in a different language, highlighting a novel approach to the problem. 
We always assume that algebraic curves are smooth, projective, and connected, and defined over a field $k \subseteq \C$. Maps between curves are non-constant dominant morphisms, hence branched covers.

\begin{definition}
    We say that a correspondence $\alpha\colon X \leftarrow Z \rightarrow W$ between algebraic curves over $k$ \emph{fits in a Galois diagram} if there exists a morphism $Z' \to Z$ such that both compositions $Z' \to X$ and $Z' \to W$ are Galois covers.
\end{definition}

If \( \alpha \) fits in a Galois diagram and \( g(Z) \geq 2 \), then \( \alpha \) admits a \emph{core}: it dominates the trivial correspondence \( \PP^1 = \PP^1 = \PP^1 \) (see \Cref{section: fitting in a Galois diagram}).

\begin{conjecture}[Every Prym correspondence fits in a Galois diagram]
{\label{conjecture}}
    Let $\pi\colon Y \to X$ be a branched cover of degree $n \geq 2$. Assume that the generalized Prym variety of the cover is isogenous to a Jacobian $\Jac W$. There is an isogeny $\Jac Y \sim \Jac X \times \Jac W$ that is induced by a correspondence of algebraic curves
    \begin{equation}
    {\label{eq: first instance of corr diagram}}
    \begin{tikzcd}
        X & Y \lar["\pi"'] & Z \rar["\alpha_W"]\lar["\alpha_Y"'] & W 
    \end{tikzcd}  
    \end{equation}
    that fits in a Galois diagram.
\end{conjecture}

\Cref{conjecture} suggests searching for a map $x \colon X \to \PP^1$ with the following property. Denote by $\alpha\colon\bar Y \to \PP^1$ the normal closure of $x\pi\colon Y\to X \to \PP^1$. There is a subcover $\alpha_W\colon \bar Y \to W$ of $\alpha$ such that the homomorphism
\[
    \pi_\ast \times (\alpha_W)_\ast (\alpha_Y)^\ast \colon \Jac Y \to \Jac X \times \Jac W
\]
induced by the algebraic correspondence is an isogeny.
This is the approach of \cite{lombardo2020decomposing} where an algorithm is presented that, starting with the cover $\pi$, computationally checks if $W$ can be realized as a quotient of $\bar{Y}$ and whether the correspondence gives the desired isogeny.
Numerous comprehensive tables report the output of the cited algorithm, corroborating the conjecture \cite[Tables 3-9]{lombardo2020decomposing}.

Although any isogeny between Jacobians is induced by a correspondence between the underlying curves \cite[Theorem 11.5.1]{MR2062673}, there is a priori no reason why the correspondence should fit in a Galois diagram; see \Cref{example: correspondence degree 3} and \Cref{remark: the monodromy construction does not give the complementary cover directly}.

\begin{remark}
    We record an interesting question tailored to our discussion.
    For two curves $X$ and $Y$, consider the subspace $\mathfrak{g}$ of $\Hom(\Jac Y,\,  \Jac X) \otimes_\Z \Q$ generated by correspondences that fit in a Galois diagram. Is $\mathfrak{g}$ a proper subspace? 
\end{remark}

\subsection{A concrete case}
\label{section: the problem}
Let $X$ and $Y$ be algebraic curves of genus $g(X) =1$ and $g(Y)=2$, respectively. 
Let $\pi\colon Y \to X$ be a dominant morphism of degree $n \geq 2$.
The covering map $\pi$ induces a non-constant homomorphism between the Jacobian varieties
\[  \pi_\ast\colon \Jac Y \to \Jac X. \]
Since $\Jac X$ is one-dimensional while $\Jac Y$ has dimension two, the connected component of  $\ker\pi_\ast$ is a one-dimensional abelian variety, known in the literature as the generalized Prym variety of $\pi$. Every one-dimensional abelian variety is the Jacobian of a genus-one curve $W$. The Prym variety $\Jac W$ is uniquely determined up to isogeny by the property of fitting in a decomposition
\begin{equation}
    {\label{eq: the hiding elliptic curve}}
    \Jac Y \sim \Jac X \times \Jac W.
\end{equation}
We describe an algorithm that, given $\pi\colon Y \to X$, outputs a representative $W$ for the isogeny class of $\Jac W$; we call $W$ a \emph{complementary curve} for the cover $\pi$. 
We assume without loss of generality that the cover $\pi$ is \emph{irreducible}, i.e.~that it does not admit any intermediate sub-cover. We further assume that there is a Weierstrass point $y_\infty \in Y(\bar k)$ such that $x_\infty = \pi(y_\infty)$ is a $k$-rational point in $X$; this hypothesis can be dropped, as explained in \Cref{remark: dropping rationality}. We fix $x_\infty$ as the zero of an elliptic curve structure on $X$.

\subsection{Original contributions}
We prove that \Cref{conjecture} holds in the setting of \Cref{section: the problem}---for all covers between curves of genera 2 and 1, independently of the degree and the ramification type---and the core is the quotient $x\colon X \to \PP^1$ introduced in \Cref{definition: associate P1 cover phi}.

\begin{theoremX}
\label{theorem: main theorem}
    Let $X$ and $Y$ be smooth projective curves of respective genus $g(X)=1$ and $g(Y)=2$, defined over a field $k$.
    Let $\pi\colon Y \to X$ be an irreducible branched cover of degree $n \geq 2$.
    There is an isogeny $\Jac Y \sim \Jac X \times \Jac W$ that is induced by a correspondence of algebraic curves
    \begin{equation*}
    \begin{tikzcd}
        X & Y \lar["\pi"'] & Z \rar["\alpha_W"]\lar["\alpha_Y"'] & W 
    \end{tikzcd}  
    \end{equation*}
    that fits in a Galois diagram. The core is the hyperelliptic quotient $x\colon X \to \PP^1$.
\end{theoremX}

The proof is divided into three steps: we present a geometric construction that outputs equations for $W$, we show that there is a correspondence between $W$ and $Y$ that induces the desired isogeny, and we show that the correspondence fits in a Galois diagram.

\subsubsection*{A geometric construction}
\label{subsection: geometric construction}
The base points $y_\infty \in Y(\bar k)$ and $x_\infty \in X(k)$ give embeddings of each curve in its Jacobian, and a birational morphism $j \colon \Sym^2 Y \to \Jac Y$.
Composing $j$ with $\pi_\ast\colon \Jac Y \to \Jac X$ we get a rational morphism
\begin{equation*}
    {\label{eq: intro explicit equations for W}}
    \theta\colon \Sym^2 Y \to \Jac X, \quad
    [y_1, y_2] \mapsto [(\pi y_1)+(\pi y_2)-2\cdot (x_\infty)],
\end{equation*}
whose fibers are birational to $\ker(\pi_\ast)$ by construction.
Let $W$ be the normalization of the closure of $\theta^{-1}(x_\infty)$ in $\Sym^2 Y$.

\begin{theoremX}[\Cref{prop: UZ geometry}]
\label{theorem: Prym as a fiber}
    Let $X$ and $Y$ be smooth projective connected curves of genus $g(X)=1$ and $g(Y)=2$, defined over a field $k$.
    Let $\pi\colon Y \to X$ be an irreducible branched cover of degree $n \geq 2$.
    With notation as above, there is a $k$-isomorphism of algebraic curves
    $W \simeq \ker(\pi_\ast)$.
\end{theoremX}

This construction was suggested by Zannier upon reading an earlier version of the present paper and has since been detailed in \cite{gallese2026complement}.
It is recorded here in \Cref{section: new geometric construction} with a presentation adapted to the present context.
The construction also provides useful insight into the next step of the present work, whose content is unchanged from the previous version but whose motivation is now more transparent.

\subsubsection*{A correspondence}
By construction, $W$ is birationally equivalent to the curve in $\Sym^2 Y$ defined by \(\pi(y_1)+\pi(y_2)=0.\)
One gets explicit equations for $W$ by recalling that $\Sym^2 Y$ is the quotient of $Y^2$ by the involution $\eta \colon (y_1, y_2) \mapsto (y_2, y_1)$. This suggests that we can naturally interpret $W$ as a degree-2 quotient of the fiber product $Y\times_XY$. In \Cref{section: the correspondence}, we identify a curve $Z \subseteq Y \times_XY$ and a correspondence
\begin{equation}
    \label{eq: correspondence in the UZ setting intro}
   \begin{tikzcd}
        X & Y \lar["\pi"'] & Z \lar["\pr_1"'] \rar["\beta"] & W,
   \end{tikzcd}
\end{equation}
where $\beta$ is a degree 2 map induced by the projection $Y^2 \to \Sym^2 Y$, $\pr_1$ is the projection of the fiber product onto its first component, and $Z$ is birationally equivalent to the curve in $Y^2$ cut out by the equation \(\pi(y_1)+\pi(y_2)=0\).

\begin{theoremX}
\label{theorem: the correspondence gives a Prym isogeny}
    Let $X$ and $Y$ be smooth projective connected curves of genus $g(X)=1$ and $g(Y)=2$, defined over a field $k$.
    Let $\pi\colon Y \to X$ be an irreducible branched cover of degree $n \geq 2$. With notation as above, correspondence \Cref{eq: correspondence in the UZ setting intro} induces the degree $n^2$ isogeny \[\pi_\ast \times \beta_\ast\pr_1^\ast \colon \Jac Y \to \Jac X \times \Jac W. \]
\end{theoremX}
\begin{proof}
    \Cref{lemma: the natural correspondence retrieves the uz construction} identifies the map $\beta_\ast\pr_1^\ast$ with the inclusion of $\ker(\pi_\ast)$ in $\Jac Y$. The statement follows via the standard argument in \cite[\S 1]{MR1085258}.
\end{proof}

The construction of $W$ and the correspondence in \eqref{eq: correspondence in the UZ setting intro} is algorithmic and can be carried out explicitly to recover algebraic equations for $W$ and the branched covers in the correspondence. We refer to \cite{gallese2026complement} for computational details. 
Here, in \Cref{example: degree 3}, we present a simple case to exemplify the process.

A key step in the proof of \Cref{theorem: the correspondence gives a Prym isogeny} is to classify all possible monodromy groups of the cover $\pi\colon Y \to X$; this is carried out in \Cref{lemma: under mild hypothesis G contains An}.

A standard argument in Galois theory shows that \Cref{theorem: the correspondence gives a Prym isogeny} implies \Cref{theorem: main theorem}.
Indeed, one checks that $\eta\colon (y_1, y_2) \mapsto (y_2, y_1)$ restricts to an involution of the cover $x\pi\pr_1\colon Z \to Y\to X \to \PP^1$.
Rather than providing the details of this argument, we develop a new perspective on the problem.

\subsubsection*{The Prym correspondence fits in a Galois diagram}
\label{subsection: fits in a Galois diagram}
We present a criterion to decide whether a given correspondence $\alpha\colon X \leftarrow Z \to W$ has a core, as per \Cref{definition: fits in a Galois diagram}. Denote by $\alpha_X$ and $\alpha_W$ the two maps in the correspondence $\alpha$. 
Fix a point $z \in Z(\bar k)$ and define inductively the sets
\begin{equation}
    {\label{eq: definition of chaotic set}}
    \mathcal{F}_0(z) := \{z\}, \quad
    \mathcal{F}_{i+1}(z) := \alpha_X^{-1}\left(\alpha_X(\mathcal{F}_i(z))\right) \cup \alpha_W^{-1}\left(\alpha_W(\mathcal{F}_i(z))\right).
\end{equation}
Denote their union by $\mathcal{F}(z) = \bigcup_i \mathcal{F}_i(z)$.

\begin{theoremX}
{\label{theorem: criterion}}
    Let $\alpha\colon X \leftarrow Z \to W$ be a correspondence between smooth projective algebraic curves over a field $k\subseteq \C$. If $\alpha$ has a core, then there is a positive integer $m$ such that all points $z \in Z$ satisfy the inequality 
    \begin{equation}
    {\label{eq: criterion inequality intro}}
        \#\mathcal{F}(z) \leq m.
    \end{equation}
    If there is a positive integer $m$ such that inequality \Cref{eq: criterion inequality intro} holds for infinitely many $z \in Z$, then $\alpha$ has a core.
\end{theoremX}

\begin{proof}
    Follows from \Cref{prop: theorem D in the complex setting} and \Cref{remark: having a core is a geometric property}.
\end{proof}

In \Cref{section: conclusions}, we deduce \Cref{theorem: main theorem} by applying \Cref{theorem: criterion} to the correspondence \cref{eq: correspondence in the UZ setting intro}.
In \Cref{section: related questions}, we discuss related questions that help contextualize the present work in a broader mathematical context.

\subsection{Acknowledgements}
The author is grateful to Davide Lombardo for proposing this fun problem and for his essential guidance throughout all stages of this work. The author also thanks Alberto Casali for his enthusiastic participation in numerous discussions on the geometry of the problem, and Fabrizio Bianchi for clarifying several dynamical aspects of the push-out criterion.
Further thanks go to Alexandrou Konstantinou, Christophe Ritzenthaler, Raju Krishnamoorthy, and Umberto Zannier for their insightful comments on earlier versions of the manuscript.

\section{Construction of the complementary curve}
\label{section: new geometric construction}
Let $\pi\colon Y \to X$ be an irreducible branched cover of smooth projective curves over a field $k$ of respective genus $g=g(Y)$ and $g(X)$. Fix a base point $y_\infty \in Y(\bar k)$ such that $x_\infty = \pi(y_\infty)$ is a $k$-rational point in $X$. The base points $y_\infty$ and $x_\infty$ give embeddings of each curve in its Jacobian.
We compose the birational morphism 
\begin{equation*}
    {\label{eq: natural birational morphism between symmetric power and Jacobian}}
    j\colon \Sym^g Y \to\Jac Y, \qquad (y_1, \dots, y_g)\mapsto [(y_1) + \dots + (y_g) - g\cdot (y_\infty)]
\end{equation*}
with the push-forward $\pi_\ast$ to obtain the morphism
\begin{equation*}
    \theta \colon \Sym^g Y \to \Jac X, \qquad (y_1, \dots, y_g) \mapsto [(\pi y_1)+\cdots +(\pi y_g) -g \cdot (x_\infty)].
\end{equation*}
The rational morphism $j$ restricts to an open embedding on an open subset $U \subseteq \Sym^g Y$. An explicit description of $U$ can be found in \cite[IV.\S 4]{MR770932}. The image $j(U)$ is the complement of the theta divisor in $\Jac Y$. We denote by $\theta^{-1}(x)$ the fiber of the morphism $\theta\colon U \to \Jac X$ over a closed point $x \in \Jac X$. Let $W_x$ denote the normalization of the closure of $\theta^{-1}(x)$ in $\Sym^g Y$.

\begin{proposition}
\label{prop: UZ geometry}
    There is an open subset $U_\theta \subseteq \Jac X$ such that the following hold for all points $x \in U_\theta(\bar k)$.
    \begin{enumerate}
        \item The projective variety $W_x$ is birationally equivalent to $\ker (\pi_\ast)$ over $k(x)$.

        \item Assume that $g(Y) = g(X)+1$. The curve $W_x$ is isomorphic to $\ker (\pi_\ast)$ over $k(x)$.
    \end{enumerate}
\end{proposition}
\begin{proof}
    Part (1) follows from the birationality of $j$ \cite[Theorem 5.1]{MilneJacobians}. As all fibers of a homomorphism of abelian varieties are isomorphic, one has
    \[\pi_\ast^{-1}(x) \simeq \pi_\ast^{-1}(0) = \ker(\pi_\ast). \] 
    Observe that $\ker(\pi_\ast)$ is connected, since $\pi$ is irreducible \cite[\S 1]{MR1085258}. 
    Because $\pi_\ast$ is dominant, the image $\pi_\ast j(U)$ contains an open set $U_\theta$. For all points $x \in U_\theta(\bar k)$ the intersection $j(U) \cap \pi^{-1}_\ast(x)$ is non-trivial and hence open and dense in $\pi_\ast^{-1}(x)$. One thus gets birational morphisms
    \begin{equation*}
    \begin{tikzcd}[column sep = small]
        W_x \rar &
        \theta^{-1}(x) \rar["j"] &
        \pi_\ast^{-1}(x) \cap j(U) \rar &
        \pi_\ast^{-1}(x) \simeq \ker(\pi_\ast).
    \end{tikzcd}
    \end{equation*}
    
    For part (2), notice that $\ker(\pi_\ast)$ and $W_x$ are both algebraic curves. 
    Algebraic curves are birational if and only if they are isomorphic.
\end{proof}

\begin{remark}
\label{remark: blow-down}
    The assumption $g(Y)-g(X)=1$ and the Riemann-Hurwitz formula imply that either $g(X)=1$ and $g(Y)=2$, the setting of \Cref{section: the problem}, or that $g(X)=2$ and $Y$ is a genus-3 degree-2 étale cover of $X$.
    If $g(Y)-g(X) > 1$, the birational equivalence of \Cref{prop: UZ geometry}(1) cannot always be upgraded to an isomorphism. It follows from the analysis in \cite[IV.\S 4]{MR770932} that $\Sym^g Y \to \Jac Y$ is a blow-down of projective spaces; the exceptional divisors are mapped to points in the Jacobian that correspond, via its moduli-space interpretation, to divisors whose linear space have higher dimension than the general one. Therefore $W_x \to \ker(\pi_\ast)$ is a blow-down as well, and its special fibers can be understood in terms of divisors of $Y$.
\end{remark}

\begin{remark}
    \Cref{prop: UZ geometry} gives explicit equations for a representative $W_x$ in the birational class of $\ker(\pi_\ast)$, namely
    \begin{equation}
    \label{eq: divisorial equation for Prym in highe genus case}
        [(\pi y_1)-(x_\infty)] + \dots + [(\pi y_g) - (x_\infty)] = 0,
    \end{equation}
    provided one has access to explicit equations for the addition on the abelian variety $\Jac X$.
    When $g(X)=1$, one identifies $\Jac X$ with the elliptic curve structure on $X$ with base-point $x_\infty$ and the rational functions defining the addition of $\Jac X$ are easily computed~\cite[\S III.2]{MR2514094}. When $g(X) > 1$ explicit equations for the addition are not always easy to write down; see \cite{MR4622408} for the genus-2 case.
\end{remark}

\Cref{prop: UZ geometry} implies that for a generic closed point $x \in \Jac X$, the variety $W_x$ is birationally equivalent to the complementary abelian variety of $\pi\colon Y \to X$. Notice that, for any $x$, the intersection $j(U) \cap \pi_\ast^{-1}(x)$ is either empty or a dense open subset of the fiber. 

\begin{definition}
\label{definition: W}
    In the setting of \Cref{section: the problem}, we denote by $W$ the curve $W_x$ corresponding to $x=x_\infty$. We will show in \Cref{lemma: quotients} that $W$ is one-dimensional or, equivalently, that $\theta^{-1}(x_\infty)$ is non-empty.
\end{definition}

\section{The Prym correspondence}
\label{section: the correspondence}
From this point on, assume that $g(Y)=2$ and $g(X)=1$.
Let $W$ be as in \Cref{definition: W}.
Recall that $\theta^{-1}(x_\infty)$ is an open subset of the curve in $\Sym^2 Y$ defined by
\begin{equation}
\label{eq: equatons for W}
    \pi(y_1)+\pi(y_2)=0,
\end{equation}
where we use $[y_1,y_2]$ as coordinates on $\Sym^2 Y$ and write square brackets to emphasize that the pair is unordered:
$[y_1,y_2] = [y_2,y_1]$.

\begin{definition}
    {\label{def: iota second version}}
    Denote by $\iota \colon Y \to Y$ the hyperelliptic involution on $Y$. We assume that there is a Weierstrass point $y_\infty \in Y(\bar k)$ such that $x_\infty= \pi(y_\infty)$ is $k$-rational; this base point makes $X$ into an elliptic curve such that
    \begin{equation}
    {\label{diagram: definition of iota on Y and its commutativity}}
    \pi \circ \iota = [-1] \circ \pi,
    \end{equation}
    see \cite[p.42]{Kuhn} for details. We denote by $\iota\colon X \to X$ the involution corresponding to $[-1]$.
    
    {\label{definition: associate P1 cover phi}}
    Taking the quotients of $X,\, Y$ by the respective automorphism $\iota$ gives a commutative diagram of branched covers
\begin{equation}{\label{diagram: definition of phi}}
\begin{tikzcd}
    Y \arrow[d, "\pi"] \arrow[r, "t"] & \mathbb{P}^1 \arrow[d, "\phi"] \\
    X \arrow[r, "x"] & \mathbb{P}^1.
\end{tikzcd} 
\end{equation}
The map $\phi$ is known in the literature as the $\PP^1$-covering associated with $\pi$, see \cite[Section 2]{Shaska}, \cite[Proposition 2.2]{MR1962943}, and  \cite[Proposition 2.4]{Frey}.
Notice that diagram \Cref{diagram: definition of phi} is cartesian (in the category of ramified $\PP^1$-covers): indeed, the fiber product $X\times_{\PP^1}\PP^1$ has total degree at least $2n$ over $\PP^1$ (because $x$ is not a subcover of $\phi$) and is dominated by $Y$ (because of the universal property), through a map of degree 1, i.e.~an isomorphism.
\end{definition}

\begin{remark}[Monodromy methods]
    The combined data of the branch locus of $\phi$ and its ramification type uniquely determine the original cover
    $\pi \colon Y \to X$, as explained in \cite[Proposition~2.3]{Frey} (at least for $n \geq 7$).
    This result connects the moduli spaces of the branched covers $\pi \colon Y \to X$ considered in this work with the
    well-established theory of Hurwitz spaces and provides a foundation for a computational approach.
    Details can be found in \cite{Kuhn, MR1363491, Shaska, magaard2012genus}, where this approach has been
    implemented for degrees $n = 2,\, 3,\, 5,$ and $7$.
    An alternative approach to computing explicit models was introduced in \cite{MR3427148} and has been
    implemented for $n \leq 11$.
    The main drawback of these approaches is that they depend on the ramification type of $\phi$, thereby
    splitting the classification problem into several distinct subproblems.
\end{remark}

The hyperelliptic involution is a deck transformation for $x\pi\colon Y\to \PP^1$. Consider the fiber product $Y\times_XY$ in the category $\FinEtCov(\PP^1, \mathcal{B}_\phi)$ of finite covers of $\PP^1$ unramified outside $\mathcal{B}_\phi\subseteq X(\bar k)$, the set of branch points of $x\pi$:
    \begin{equation}
    \label{eq: fiber product askew}
    \begin{tikzcd}
        \rar["\pr_1"] \dar["\pr_2"] Y \times_XY & Y \dar["\pi"] \\
        Y \rar["\pi\iota"] & X. 
    \end{tikzcd}
    \end{equation}
Since $\FinEtCov(\PP^1, \mathcal{B}_\phi)$ is a Galois category~\cite[Definition 2.1, Theorem 5.10, and Exercise 4.4]{cadoret}, fiber products exist and $Y \times_XY$ is well defined up to unique isomorphism.

\begin{definition}
\label{definition: Z}
    We prove in \Cref{corollary: Z has two connected components} that the fiber product in \Cref{eq: fiber product askew} has two connected components 
    \[ Y \times_XY = \Delta Y \sqcup Z. \]
    We denote by $\Delta Y$ the diagonal embedding (induced by $\id_Y,\, \iota$), which is the normalization of the curve defined by $y_1 = \iota(y_2)$ in $Y^2$. We denote by $Z$ the other component.
\end{definition}


Let $\pi_\eta \colon Y^2 \to \Sym^2 Y$ be the natural projection sending an ordered pair $(y_1,y_2)$ to the
corresponding unordered pair $[y_1,y_2]$. This map is the quotient by the involution
$\eta \colon Y^2 \to Y^2$ that swaps the two coordinates.

\begin{lemma}
\label{lemma: quotients}
The involution $\eta$ restricts to each component of $Y\times_XY$. Moreover, it induces
isomorphisms
\[
Y / \langle \eta \rangle \simeq \PP^1
\qquad \text{and} \qquad
Z / \langle \eta \rangle \simeq W,
\]
where $W$ is the curve of \Cref{definition: W}.
In particular, $W=W_{x_\infty}$ is non-empty.
\end{lemma}

\begin{proof}
    It is immediate to check that $\eta$ extends to an involution of $Y\times_XY$ and restricts to the hyperelliptic involution on the diagonal component, and therefore to an involution on $Z$ as well.
    The quotient of $\Delta Y$ by $\eta$ is birational to the curve in $\Sym^2 Y$ defined by the equation
    $y_1 = \iota(y_2)$. Under the map $j\colon \Sym^2 Y \to \Jac Y$, the point $[y_1, y_2]$ is sent to
    \begin{equation}
    \label{eq:point-on-Jacobian-Y}
        [(y_1)+(y_2)-2\cdot(y_\infty)]
        =
        [(\iota y_2)+(y_2)-2\cdot(y_\infty)]
        \in \Jac Y .
    \end{equation}
    The class in \Cref{eq:point-on-Jacobian-Y} is always trivial, since
    \[
        (\iota y_2)+(y_2)-2\cdot(y_\infty)
        = \begin{cases}
            \operatorname{div}(t-t(y_2)) & \text{ if } y_2 \neq y_\infty, \\
            0 & \text{ if } y_2 = y_\infty.
        \end{cases}
    \]
    As the one-dimensional curve (birational to) $\Delta Y/\langle\eta\rangle$ is mapped by $j$ to a
    single point, it must be the exceptional divisor of the morphism
    $j \colon \Sym^2 Y \to \Jac Y$ corresponding to the blow-up of $\Jac Y$ at the origin.
    Since all exceptional fibers of a blow-up are projective spaces, as per \Cref{remark: blow-down}, there is an isomorphism
    \(
    \Delta Y/\langle \eta \rangle \simeq \PP^1.
    \)
    
    On the other hand, the curve $Z$ is birational to the curve in $Y^2 \setminus \Delta Y$ cut out by
    equation~\Cref{eq: equatons for W}.
    Since the hyperelliptic involution on $Y$ is unique, the class
    $[(y_1)+(y_2)-2\cdot(y_\infty)]$ is nonconstant as $(y_1,y_2)$ varies in $Z$, and hence
    $\theta^{-1}(x_\infty)$ is nonempty.
    Therefore, the projection $\pi_\eta$ induces a rational map
    \(
    Z \dashrightarrow \theta^{-1}(x_\infty).
    \)
    By \Cref{prop: UZ geometry}, composing further yields a rational map
    \[
    Z \dashrightarrow \theta^{-1}(x_\infty) \dashrightarrow W.
    \]
    Since both $Z$ and $W$ are algebraic curves, this rational map extends to a morphism
    $\beta \colon Z \to W$ of degree~$2$.
\end{proof}

From \Cref{lemma: quotients}, we get a correspondence
\begin{equation}
    \label{eq: correspondence in the UZ setting}
   \begin{tikzcd}
        X & Y \lar["\pi"'] & Z \lar["\pr_1"'] \rar["\beta"] & W.
   \end{tikzcd}
\end{equation}
We prove in the following lemma that correspondence \Cref{eq: correspondence in the UZ setting} induces the natural inclusion map $\Jac W \to \Jac Y$ that defines $\Jac W$ as the kernel of $\pi_\ast\colon \Jac Y \to \Jac X$.

\begin{lemma}
\label{lemma: the natural correspondence retrieves the uz construction}
    The following diagram is commutative
    \begin{equation*}
    \begin{tikzcd}
        \theta^{-1}(x_\infty) \rar[hook] \dar[dashed]
        & \Sym^2 Y \rar["\theta"] \dar["j"]
        & \Jac X \dar[equal] \\
        \Jac W \rar["(\pr_1)_\ast\beta^\ast"] & \Jac Y \rar["\pi_\ast"] & \Jac X.
    \end{tikzcd}
    \end{equation*}
\end{lemma}
\begin{proof}
    The right square commutes by definition of $\theta$.
    We can identify a point of $\theta^{-1}(x_\infty)$ with an unordered pair $[y_1, y_2] \in \Sym^2 Y$ such that $\pi(y_1)+\pi(y_2) = 0$ and $y_2 \neq \iota(y_1)$.
    This is sent to a point in $\Jac W = W$ through the vertical map $\theta^{-1}(x_\infty) \dashrightarrow \Jac W$. This point is the image of $(y_1, y_2) \in Z \subseteq Y \times_XY$ through $\beta\colon Z \to W$; we denote its class by $[y_1, y_2]$.
    Directly from our construction, we have that
    \[ \beta^\ast[y_1, y_2] = [(y_1, y_2) + (y_2, y_1) -2\cdot (z_\infty)] \in \Jac Z, \]
    where each $(y, y')$ is an ordered pair in $Z \subseteq Y^2$ and $z_\infty$ is a basepoint on $Z$ such that $\pr_1(z_\infty)=y_\infty$. It follows that
    \[(\pr_1)_\ast\beta^\ast[y_1, y_2] = [(y_1)+(y_2) -2\cdot (y_\infty)] \in \Jac Y, \]
    as claimed.
\end{proof}

\Cref{lemma: the natural correspondence retrieves the uz construction} identifies the map induced by the correspondence \cref{eq: correspondence in the UZ setting} with the inclusion of $\ker(\pi_\ast)$ in $\Jac Y$, proving \Cref{theorem: the correspondence gives a Prym isogeny}. 

\begin{remark}
\label{remark: dropping rationality}
    The assumption that the base points $x_\infty$ and $y_\infty$ be $k$-rational can be dropped.
    Indeed, if $\pi\colon Y \to X$ is defined over $k$, then the set $\mathcal{B}_\phi$ and the involution $\eta$ are defined over $k$.
    It follows that the curve $W$ can be constructed over $k$ by forming the fiber product in \Cref{eq: fiber product askew} and applying the projection of \Cref{lemma: quotients}.
\end{remark}

\begin{example}
{\label{example: degree 3}}
    As already mentioned, \cite[Section 6]{Kuhn} gives an explicit presentation of the moduli space of all covers $\pi\colon Y \to X$ of degree 3 with $g(Y)=2$ and $g(X)=1$. We select the point $(a,b,c)=(0,0,-4)$ in this moduli space (the reader may tweak this parameter in our script \cite{OurScripts} to get more examples).
    It corresponds to the cover $\pi\colon Y \to X$, where
    \begin{equation*}
        \label{eq: generic case example degree 3}
        \begin{cases}
            Y \colon    & s^2 = (t^3-1)(t^3-4),    \\
            X \colon    & y^2 = f(x) =  27x^3+1,
        \end{cases}
    \end{equation*}
    and the associated $\PP^1$-cover is $\phi(t) = {\phi_N(t)}/{\phi_D(t)} = {t^2}/{(t^3-4)}$.
    Since $Y$ is the limit of $x,\, \phi\colon X \to \PP^1 \leftarrow \PP^1$ (we explain in \Cref{definition: associate P1 cover phi} that diagram \Cref{diagram: definition of phi} is cartesian), we can also realize $Y\times_XY$ as the limit of the following diagram
    \begin{equation}
    \label{diagram: zigzag for pullback}
    \begin{tikzcd}[column sep = small]
    \mathbb{P}^1 \arrow[rd, "\phi"'] &&
    X \arrow[ld, "x"'] \arrow[rd, "x"] &&
    \mathbb{P}^1 \arrow[ld, "\phi"] \\
    & \mathbb{P}^1 && \mathbb{P}^1. &
    \end{tikzcd}
    \end{equation}
    An affine model for $Y \times_XY$ as the limit of diagram \Cref{diagram: zigzag for pullback} is given by the ideal in $k[t_1, t_2, y, x]$ generated by
    \[
        \phi_N(t_1) - x \cdot \phi_D(t_1), \quad 
        \phi_N(t_2) - x \cdot \phi_D(t_2), \quad
        y^2 = f(x).
    \]
    With the help of a computer algebra system, it is easy to check that $Y \times_XY$ has two connected components: $\Delta Y$ has genus 2, while $Z$ has genus 4. Their quotients by $\eta$ are, respectively, $\PP^1$ and an elliptic curve $W$ with $j(W)=0$. The result agrees with \cite[Section 6]{Kuhn}.
\end{example}

\section{Fiber product of a cover with itself}
{\label{section: construction of the product}}
The goal of this section is to prove that the fiber product of \Cref{eq: fiber product askew} has two connected components. We base change to the complex field and assume $k = \C$ for the entire section, as this streamlines some geometric arguments; we explain in the proof of \Cref{corollary: Z has two connected components} how to remove this hypothesis. As a byproduct, we compute the possible monodromy groups of the cover $\pi$.

Let $X$ and $Y$ be complex connected smooth projective curves of respective genera $g(X)=1$ and $g(Y)=2$.
Let $\pi\colon Y \to X$ be a branched cover of degree $n$.
Assume that $\pi\colon Y \to X$ is irreducible: if $\pi$ splits as the composition of two morphisms, one of the two has degree 1.
The branched cover $\pi$ is étale outside a finite set $\mathcal{B}\subseteq X$, known as the branch locus of $\pi$ \cite[\href{https://stacks.math.columbia.edu/tag/0C1C}{Lemma 0C1C}]{stacks-project}.
\begin{remark}
{\label{remark: the famous 3 cases}}
    Let $e_\pi(y)$ denote the ramification index of a point $y \in Y$ through the map $\pi$. The Riemann-Hurwitz formula gives
    \[ \textstyle \sum_{y \in Y} (e_\pi(y)-1) = (2g(Y)-2)-n\cdot (2g(X)-2) = 2. \]
    There are only 3 ramification types compatible with this computation.
    \begin{enumerate}[label=\Roman*.]
        \item There are two ramified points $y_1, \, y_2 \in Y$ with ramification index 2 in different fibers: $e_\pi(y_1)=e_\pi(y_2)=2$ and $\pi(y_1)\neq \pi(y_2)$. This is the generic ramification type. 
        
        \item There are two ramified points $y_1, \, y_2 \in Y$ with ramification index 2 in the same fiber: $e_\pi(y_1)=e_\pi(y_2)=2$ and $\pi(y_1) = \pi(y_2)$.
        
        \item There is a single ramified point $y \in Y$ with ramification index 3: $e_\pi(y)=3$.
    \end{enumerate}
\end{remark}

\begin{definition}
\label{definition: fiber product not askew}
    Let $Y\times_XY$ be the following fiber product in the category $\FinEtCov(X, \mathcal{B})$:
    \begin{equation}
    {\label{diagram: definition of Z}}
    \begin{tikzcd}
    Y\times_XY  \rar["\pr_1"]\dar["\pr_2"] & Y \dar["\pi"] \\
    Y \rar["\pi"] & X.
    \end{tikzcd}
    \end{equation}
    This is formally different from the fiber product of \Cref{eq: fiber product askew}, but the two products are isomorphic. 
    Via the equivalence between branched covering spaces and étale algebras of \cite[Theorem 6.16]{cadoret}, we can realize $Y\times_XY$ as the normalization of $X$ in $\Spec\left[ k(Y) \otimes_{k(X)} k(Y) \right]$.
\end{definition}

\subsection{The connected components}
The number of connected components of $Y\times_XY$ is equal to the number of factors of the corresponding étale $k(X)$-algebra $k(Y)\otimes_{k(X)}k(Y)$. The following lemma in commutative algebra determines the number of components.

\begin{lemma}
{\label{lemma: L tensor L over K}}
    Fix a field $K$ and a separable irreducible polynomial $f \in K[t]$, and consider the field extension $L=K[t]/(f)$. Let $f = g_1g_2\cdots g_r$ be the factorization of $f$ into irreducible polynomials over $L$. There is an isomorphism of $K$-algebras
    \[ L \otimes_K L \simeq \prod_{i=1}^r \frac{L[t]}{(g_i(t))}. \]
\end{lemma}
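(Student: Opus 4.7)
The plan is to tensor up the presentation $L = K[t]/(f)$ against $L$, and then apply the Chinese Remainder Theorem to the factorization of $f$ over $L$. The main input of substance is that separability of $f$ forces the irreducible factors $g_i$ over $L[t]$ to be pairwise coprime.

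First, I would use the standard base-change identity for tensor products of polynomial rings modulo ideals. Since $L = K[t]/(f)$, we get a canonical isomorphism of $L$-algebras
\[
  L \otimes_K L \;\simeq\; L \otimes_K \bigl(K[t]/(f)\bigr) \;\simeq\; L[t]/(f),
\]
where $L$ acts on the left tensor factor on the left-hand side and on the coefficients of $L[t]$ on the right-hand side. This step is purely formal and uses only that tensor product commutes with taking polynomial rings and with forming quotients.

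Next, I would show that the irreducible factors $g_1, \ldots, g_r$ of $f$ in $L[t]$ are pairwise coprime in $L[t]$. Here is where separability enters: because $f$ is a separable polynomial over $K$, it has distinct roots in an algebraic closure of $K$ (equivalently, $\gcd(f, f') = 1$). Any two distinct monic irreducible factors $g_i, g_j$ over $L$ have no common root (a common root would be a multiple root of $f$), so $\gcd(g_i, g_j) = 1$ in $L[t]$. Hence the ideals $(g_i)$ and $(g_j)$ are comaximal in the PID $L[t]$.

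Finally, I would apply the Chinese Remainder Theorem in the PID $L[t]$ to the pairwise coprime factorization $(f) = (g_1) \cdots (g_r)$, yielding
\[
  L[t]/(f) \;\simeq\; \prod_{i=1}^r L[t]/(g_i),
\]
as $L$-algebras, and chain this with the first isomorphism to conclude. The potential obstacle — if any — is just making sure the separability hypothesis is properly invoked to guarantee distinctness of the $g_i$'s; without this, one would only obtain $(f) = \prod (g_i)^{m_i}$ and the CRT step would fail. No nontrivial geometry or algebraic-number theory is needed beyond these elementary facts.
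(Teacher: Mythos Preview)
Your proposal is correct and is exactly the standard argument: base-change the presentation $L=K[t]/(f)$ to get $L\otimes_K L\simeq L[t]/(f)$, then use separability to ensure the irreducible factors $g_i$ are pairwise coprime and apply the Chinese Remainder Theorem. The paper in fact states this lemma without proof, treating it as an elementary fact of commutative algebra, so there is nothing to compare against; your write-up would serve perfectly well as the omitted justification.
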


Concretely, when the cover $\pi$ is given explicitly, one can compute a polynomial $f$ such that $k(Y)\simeq k(X)[t]/(f)$, its decomposition into simple factors over $L=k(Y)$, as in the statement of \Cref{lemma: L tensor L over K}, and therefore the number of connected components of $Y\times_XY$. 

\begin{example}[the Galois case]
\label{remark: the Galois case I}
    Suppose that $\pi\colon Y \to X$ is a normal cover or, equivalently, that $k(Y)/\pi^\ast k(X)$ is a normal extension with Galois group $G$.
    This is an extremely restrictive condition: if $\pi$ is normal, all points in a fiber $\pi^{-1}(x)$ have the same ramification degree \cite[Proposition 3.2.10]{Tamas}.
    From the computation in \Cref{remark: the famous 3 cases} it follows that there are only a few possible ramification types for $\pi$. 
    The degree $n$ of $\pi$ (which is equal to the cardinality of $G$) is bounded: at most, a fiber containing a ramified point, contains two doubly ramified ones, and therefore $n \leq 4 $. Furthermore, notice that $n \neq 4$, since $G$ must be simple (and $\pi$ is irreducible). Therefore $n \leq 3$.
    Also, we have an isomorphism
    \begin{equation*}
    {\label{eq: KY tensor KY in the Galois case}}
    \begin{aligned}
        k(Y) \otimes_{k(X)} k(Y) &\simeq \textstyle \prod_{g \in G} k(Y) \\
        (y_1, y_2) &\mapsto (y_1 \cdot g(y_2))_{g},
    \end{aligned}  
    \end{equation*}
    which proves that $Y\times_XY$ is the disjoint union of $n$ copies of $Y$, and $G$ is cyclic of order $2$ or $3$.
\end{example}

The situation is more complex when the cover $\pi\colon Y \to X$ is not normal.
Let $G$ be the Galois group of (the Galois closure of) $\pi\colon Y \to X$. Recall that, via Galois theory, the group $G$ identifies naturally to a subgroup of $\Sy_n$, up to conjugation.
We compute the possible conjugacy classes for $ G \subseteq \Sy_n$ in the following lemma.
When the degree $n$ of $\pi$ is large enough, $G$ is either the alternating group $\mathcal{A}_n$ or the symmetric group $\Sy_n$. There is an exceptional subgroup in degree $n=6$, isomorphic to $\Sy_5$ and transitive (such a subgroup exists because $\Sy_6$ has an outer automorphism \cite[Section 8.2]{dixon}).

\begin{lemma}
{\label{lemma: under mild hypothesis G contains An}}
    Assume the cover $\pi\colon Y \to X$ of \Cref{section: the problem} is irreducible of degree $n \geq 4$. The Galois group $G$ of the Galois closure of $\pi$ is conjugate to one of the following subgroups of $\Sy_n$.
    \begin{itemize}
        \item If $n \neq 6$, then $G$ is conjugate to either the symmetric group $\Sy_n$ or the alternating group $\mathcal{A}_n$.
        \item If $n = 6$, then $G$ is conjugate to either the symmetric group $\Sy_n$, or the alternating group $\mathcal{A}_n$, or the exceptional subgroup
        \[ \Sy_5 \simeq \operatorname{PGL}(2, \F_5) \simeq \langle \, (3546), \, (162)(345) \,  \rangle \subseteq \Sy_6. \]
    \end{itemize}
     
\end{lemma}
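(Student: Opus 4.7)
The argument proceeds in three stages: establish that $G$ acts primitively on $\pi^{-1}(x)$, extract from the ramification data an element of $G$ with small-support cycle structure, and invoke classical classifications of primitive subgroups of $\Sy_n$.

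Transitivity of $G$ on the fiber is immediate from connectedness of $Y$ via \Cref{theorem: explicit monodromy representation correspondence}. For primitivity: by Galois correspondence, a nontrivial block system for $G$ on $\pi^{-1}(x)$ corresponds to a proper overgroup of the stabilizer of a point, which in turn corresponds to a proper nontrivial intermediate sub-cover $Y \to Y' \to X$---contradicting the irreducibility of $\pi$.

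By \Cref{theorem: explicit monodromy representation correspondence}, at each branch point $b \in \mathcal{B}$ the monodromy generator $\rho([\gamma_b])$ has cycle type equal to the ramification type of $\pi^{-1}(b)$. Combined with the trichotomy of \Cref{remark: the famous 3 cases}, this forces $G$ to contain an element of cycle type $(2,1^{n-2})$ in Case I, $(2,2,1^{n-4})$ in Case II, or $(3,1^{n-3})$ in Case III. In Case I, Jordan's theorem (a primitive subgroup of $\Sy_n$ containing a transposition equals $\Sy_n$) yields $G = \Sy_n$ directly. In Case III, Jordan's theorem on $3$-cycles yields $G \supseteq \mathcal{A}_n$, placing $G \in \{\mathcal{A}_n, \Sy_n\}$. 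Case II requires a finer result on primitive groups possessing an element of support size $4$, concluding that $G \supseteq \mathcal{A}_n$ apart from a short list of low-degree exceptional primitive groups, which must be sieved one by one.

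\emph{Main obstacle.} Case II is the delicate one: the support $4$ of a double transposition is generous enough that several small primitive groups occur a priori---for instance $D_{10}$ and $F_{20}$ inside $\Sy_5$, or $\operatorname{PGL}(2,5) \subseteq \Sy_6$. Elimination of the spurious candidates leans on the extra constraint coming from the presentation $\pi_1(X^\circ, x) = \langle\, \alpha, \beta, \gamma \,\mid\, [\alpha,\beta]\gamma = 1\,\rangle$: in Case II, $G$ must be generated by two elements $\alpha, \beta$ whose commutator (the unique inertia element, up to inversion) is a double transposition. This fails for $D_{10}$ and $F_{20}$, whose derived subgroups consist of $5$-cycles only. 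The group $\operatorname{PGL}(2, 5) \subseteq \Sy_6$ genuinely survives---the involution $x \mapsto 1/x$ on $\PP^1(\F_5)$ is a double transposition, and $\operatorname{PGL}(2,5)$ admits generating pairs with that commutator---which is why it appears as the sole exception in the statement for $n = 6$.
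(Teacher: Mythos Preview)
Your proposal is correct and follows essentially the same route as the paper: primitivity from irreducibility of $\pi$, Jordan-type theorems (Dixon--Mortimer, Theorem~3.3A) for Cases~I and~III, and in Case~II the extra commutator constraint coming from the presentation of $\pi_1$ of a once-punctured torus to sieve the short list of small-degree primitive groups containing a double transposition. The only difference is bookkeeping: the paper disposes of the Case~II sieve for $n\le 8$ by a \textsc{Magma} check rather than the hand arguments you sketch for $n=5,6$, so to complete your write-up you would still need to eliminate the degree-$7$ and degree-$8$ candidates (e.g.\ $\operatorname{PSL}(2,7)$ in degree~$7$) by the same commutator test.
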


We prove that $G\subseteq \Sy_n$ is a \emph{primitive} transitive subgroup (following the definition of \emph{primitive} given in \cite[Section 1.5]{dixon}), with an index $n$ maximal subgroup $H\subseteq G$. Depending on the ramification type of $\pi$, we find elements in $G$ of which we know the cycle type. Group theory allows us to identify the few subgroups of $\Sy_n$ satisfying these conditions.

\begin{proof}  
    Let $k(\bar Y)/\pi^\ast k(X)$ be the Galois closure of $k(Y)/\pi^\ast k(X)$.
    The Galois group $G$ of $\pi\colon Y \to X$ is the Galois group of $k(\bar Y)/\pi^\ast k(X)$; this follows from the equivalence of \cite[Theorem 6.16]{cadoret}.
    Since $k(X)$ is perfect, $\pi^\ast$ is separable, and there is a primitive element $\alpha_1 \in k(Y)$ such that $k(Y)=k(X)(\alpha_1)$. Let $f \in k(X)[t]$ be the minimal (irreducible) polynomial of $\alpha_1$. Then $k(Y)\simeq k(X)[t]/(f)$. Furthermore, the Galois group $G$ acts transitively on the roots $\alpha_1,\,\alpha_2\, \dots, \,\alpha_n$ of $f$ and identifies through this action to a transitive subgroup of $\Sy_n$ 
    \cite[Theorem 11.6]{Garling}.

    Let $H \subseteq G$ be the subgroup corresponding to the extension $k(Y)/\pi^\ast k(X)$ via Galois theory.
    The cover $\pi$ being irreducible (and non-Galois, see \Cref{remark: the Galois case I}) translates into $H$ being maximal (and non-trivial). Notice that $H$ is the stabilizer of $\alpha_1$ in $G$. The maximality of this stabilizer is equivalent to $G$ being primitive, by \cite[Corollary 1.5A and the following paragraph]{dixon}.

    Fix a point $x \in X$ and $x \not\in\mathcal{B}$.
    Via the equivalence of \cite[Chapter III, Proposition 4.9]{mir}, the cover $\pi\colon Y \to X$ is determined by the monodromy representation
    \begin{equation}
    {\label{eq: monodromy representation in proof}}
    \rho\colon \pi_1(X - \mathcal{B}, x) \to \operatorname{Sym}\left(\pi^{-1}(x)\right) \simeq \Sy_n.
    \end{equation}
    Fix a branch point $b \in \mathcal{B}$ and let $\gamma_b$ be a small loop based at $x$ around $b$.
    The image of the representation $\rho$ is a set of generators for the Galois group $G \subseteq \Sy_n$ of $\pi$, by \cite[Theorem 3.10.(i)]{lombardo2020decomposing}.
    The cycle type of $\rho([\gamma_b])$ is the ramification type of the fiber $\pi^{-1}(b)$, as in the statement of \cite[Chapter III, Proposition 4.9]{mir}. There are three possibilities for the ramification structure of $b$, compatible with the Riemann-Hurwitz formula, as explained in \Cref{remark: the famous 3 cases}:
    \begin{enumerate}[label=\Roman*.]
        \item There are two ramified points $y_1, \, y_2 \in Y$ with ramification degree 2 in different fibers: $\pi(y_1)\neq \pi(y_2)$.
        Choosing $b = \pi(y_1)$, we get a transposition $\rho([\gamma_b])$ in $G$. By the standard group-theoretic argument of \cite[Theorem 3.3A.(ii)]{dixon}, a transitive primitive subgroup $G\subseteq\Sy_n$ with a transposition is the full symmetric group $\Sy_n$.
    \end{enumerate}
    In cases II and III the branch locus $\mathcal{B}$ of $\pi$ contains a single point $b$. The topological space underlying $X-b$ is a torus minus a point. Its fundamental group is the free group on two generators. Indeed, if we let $\gamma_1,\, \gamma_2$ be the two paths in $X-b \subseteq X$ whose classes are generators for $\pi_1(X, x)$, they satisfy the commutativity relation
    \begin{equation}
    {\label{eq: relation in pi1 of torus}}
        [\gamma_1][\gamma_2][\gamma_1]^{-1}[\gamma_2]^{-1} = 1.        
    \end{equation}
    When we remove a point, the resulting topological space retracts on $\gamma_1 \cup \gamma_2$, the wedge sum of two circles at the base point $x$. We gain a generator $[\gamma_b]$, and relation \Cref{eq: relation in pi1 of torus} transforms into
    \[ [\gamma_1][\gamma_2][\gamma_1]^{-1}[\gamma_2]^{-1} = [\gamma_b]. \]
    In particular, the group $G \subseteq \Sy_n$, is generated by $\rho([\gamma_1])$ and $\rho([\gamma_2])$ and the permutation type of their commutator is prescribed by the ramification type of $\pi^{-1}(b)$. This imposes a further group-theoretic condition on $G$.
    \begin{enumerate}[label=\Roman*.]
    \setcounter{enumi}{1}
        \item There are two ramified points $y_1, \, y_2 \in Y$ with ramification degree 2 in the same fiber: $\pi(y_1) = \pi(y_2)$.
        Choosing $b = \pi(y_1)$, we get a double transposition $\rho([\gamma_b])$ in $G$.
        By \cite[Example 3.3.1 and Theorem 3.3A.(ii)]{dixon}, when $n \geq 9$, a transitive primitive subgroup $G\subseteq\Sy_n$ with a double transposition contains the alternating group $\mathcal{A}_n$: \cite[Example 3.3.1]{dixon} proves that the minimal degree of $G$ is less than 4, hence it is $2$ or 3 (i.e.~the group contains either a 2-cycle or a 3-cycle), and \cite[Theorem 3.3A]{dixon} applies in either case.
        For $n \leq 8$ there is one exceptional subgroup of $\Sy_n$ satisfying the group-theoretic conditions found so far. With the help of the computer algebra system \textsc{Magma} we list all primitive subgroups of $\Sy_n$, and select the ones having a maximal subgroup of index $n$ and such that the commutator of two generators is a double transposition. The code in \cite{OurScripts} outputs the exceptional subgroup in degree $n = 6$.

        \smallskip
        
        \item There is one ramified point $y \in Y$ with ramification degree 3.
        Choosing $b = \pi(y)$, we get a 3-cycle $\rho([\gamma_b])$ in $G$.
        By \cite[Theorem 3.3A.(i)]{dixon}, a transitive primitive subgroup $G\subseteq\Sy_n$ containing a 3-cycle also contains the alternating group $\mathcal{A}_n$.
    \end{enumerate}
    Apart from the exceptional subgroup in degree $n =6$, the Galois group $G\subseteq \Sy_n$ contains the alternating group $\mathcal{A}_n$ as a subgroup.
    Since $\mathcal{A}_n$ is a maximal subgroup of $\Sy_n$ (its index is 2), the Galois group $G$ is isomorphic to either $\Sy_n$ or $\mathcal{A}_n$.
\end{proof}

\begin{remark}[on the exceptional case]
    In cases II (resp.~III) we imposed all possible restrictions. Let $G$ be a primitive (transitive) subgroup $G \subseteq \Sy_n$, with a maximal subgroup of index $n$ and two generators $g_1$, $g_2$ whose commutator is a double transposition (resp.~a 3-cycle). Consider the homomorphism
    \[ \rho\colon \pi_1(X-b, x) \to G, \qquad [\gamma_i] \mapsto g_i \quad i = 1,\, 2. \]
    This representation corresponds via \cite[Chapter III, Proposition 4.9]{mir} to a branched (connected) cover $\pi_\rho\colon Y \to X$ ramified over the single point $b$. Since
    \[ \rho([\gamma_b])=\rho([\gamma_1][\gamma_2][\gamma_1]^{-1}[\gamma_2]^{-1}) =g_1g_2g_1^{-1}g_2^{-1},  \]
    the ramification type of $\pi_\rho$ is that of case II (resp.~III). Applying the Riemann-Hurwitz formula, it is easy to check that $g(Y)=2$. This implies in particular that there exists a cover $\pi\colon Y \to X$ of degree $n=6$ with Galois group $G \simeq \operatorname{PGL}(2, \F_5)$.
\end{remark}

\begin{remark}[on the Galois case in degree $n=3$]
{\label{remark: degree 3 Galois case final death}}
    Suppose $\pi\colon Y \to X$ is Galois and $n=3$.
    The argument in the proof of \Cref{lemma: under mild hypothesis G contains An} applies: we are in case III (as explained in \Cref{remark: the Galois case I}), the monodromy representation $\rho$ in \Cref{eq: monodromy representation in proof} determines the cover, and the image of $\rho$ generates $G \simeq \Z/3\Z$. Since $G$ is commutative, there are no generators $g_1$ and $g_2$ in $G$ whose commutator is non-trivial.
    It follows that if $n=3$, then $\pi\colon Y \to X$ is not normal and the Galois group of its Galois closure is isomorphic to $\Sy_3$.
\end{remark}

The natural action of $G \subseteq \Sy_n$ on a set with $n$ elements is enough to determine the Galois action on $k(\bar Y)/\pi^\ast k(X)$ and the genus of each connected component of $Y \times_XY$, as explained in \Cref{lemma: L tensor L over K}. In particular, we can use the classification of \Cref{lemma: under mild hypothesis G contains An} to prove that the number of connected components of $Y \times_XY$ is $2$.

\begin{proposition}
    {\label{lemma: compute r in the generic case}}
    Let $G$ be the Galois group of (the Galois closure of) the irreducible branched cover $\pi\colon Y \to X$. Suppose $n \geq 3$.
    The curve of \Cref{definition: fiber product not askew} has two connected components $Z_1$ and $Z_2$. Up to renaming, $Z_1$ is the image of the diagonal embedding $Y\hookrightarrow Y \times_X Y$, while $g(Z_2)> g(Z_1) = 2$. 
\end{proposition}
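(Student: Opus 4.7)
The plan is threefold: count the components of $Z$ via a transitivity statement in Galois theory, identify the diagonal component, and then bound the genus of the remaining one via Riemann--Hurwitz using the ramification data of \Cref{prop: the closed points of Z}.

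\textbf{Step 1 (counting components).} By \Cref{theorem: equivalence of categories} and \Cref{remark: on connected components}, the number of components of $Z$ equals the number of field factors in the étale $k(X)$-algebra $k(Y)\otimes_{k(X)}k(Y)$. Write $k(Y)=k(X)[t]/(f)$ for the minimal polynomial of a primitive element $\alpha_1$, and apply \Cref{lemma: L tensor L over K}: this count is the number of irreducible factors of $f$ over $k(Y)$. One such factor is always $(t-\alpha_1)$, so the claim $r=2$ reduces to showing $f/(t-\alpha_1)$ is irreducible over $k(Y)$. Working in the Galois closure, standard Galois theory translates this into $2$-transitivity of $G\subseteq \Sy_n$ on the roots of $f$. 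Now \Cref{lemma: under mild hypothesis G contains An} (for $n\geq 4$), together with \Cref{remark: degree 3 Galois case final death} (for $n=3$), restricts $G$ to $\Sy_n$, $\mathcal{A}_n$ (for $n\geq 4$), or the exceptional $\PGL(2,\F_5)\subseteq \Sy_6$, each of which acts $2$-transitively on the natural $n$-element set.

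\textbf{Step 2 (the diagonal component).} The diagonal $\Delta\colon Y\to Y\times Y$ factors through $Z'$, and its image $\Delta(Y)$ is a one-dimensional irreducible closed subset --- hence an irreducible component of $Z'$. Since $Y$ is smooth, $\Delta$ lifts uniquely along the normalization $\nu\colon Z\to Z'$ to a closed immersion $\tilde\Delta\colon Y\hookrightarrow Z$: at a singular pair $(y,y)$, the local model $V_{e,e}$ from \Cref{prop: the closed points of Z} splits into $e$ smooth branches, and the diagonal selects exactly one of them. The image is a connected component $Z_1\simeq Y$ of genus $2$.

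\textbf{Step 3 (computing $g(Z_2)$).} Because $\deg(\zeta)=n^2$ and $\deg(\zeta|_{Z_1})=n$, the remaining component $Z_2$ is a cover of $X$ of degree $n(n-1)$. I total the ramification divisor of $\zeta\colon Z\to X$ over each branch point of $\pi$ using the formula in \Cref{prop: the closed points of Z}, and subtract the part pulled back to $Z_1\simeq Y$ (which is the ramification divisor of $\pi$ itself, of total degree $2$). Plugging the result into Riemann--Hurwitz for $Z_2\to X$ (with $g(X)=1$) yields, case by case in the trichotomy of \Cref{remark: the famous 3 cases}, the values $g(Z_2)=2n-2$ (Case I), $g(Z_2)=2n-4$ (Case II, which forces $n\geq 4$), and $g(Z_2)=2n-3$ (Case III). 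In every scenario one has $g(Z_2)>2=g(Z_1)$.

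\textbf{Main obstacle.} The sensitive point is the group theory in Step 1 for degree $n=6$: one must identify the action of the exceptional $\PGL(2,\F_5)\simeq \Sy_5$ on the six cosets of its index-$6$ subgroup with the classical sharply $3$-transitive action on $\PP^1(\F_5)$, so that $2$-transitivity follows. Granted this, the ramification accounting in Step 3 is mechanical but must carefully separate preimages under $\nu$ that belong to the diagonal component $Z_1$ from the rest.
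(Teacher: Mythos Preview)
Your proof is correct and follows essentially the same approach as the paper: reduce the component count via \Cref{lemma: L tensor L over K} to the irreducibility of $f/(t-\alpha_1)$ over $k(Y)$, then invoke \Cref{lemma: under mild hypothesis G contains An} and \Cref{remark: degree 3 Galois case final death}. The only differences are in execution --- the paper phrases Step~1 as transitivity of the point-stabilizer $H$ on the remaining roots (equivalent to your $2$-transitivity), handles the exceptional $\PGL(2,\F_5)$ by a \textsc{Magma} orbit computation rather than your identification with the action on $\PP^1(\F_5)$, and for the genus bound applies Riemann--Hurwitz to the degree-$(n-1)$ cover $Z_2\to Y$ to get only the inequality $g(Z_2)>2$; your explicit values $g(Z_2)=2n-2,\,2n-4,\,2n-3$ anticipate the paper's later \Cref{example: genus 2 three types} and \Cref{remark: genus of Z2 in all cases}.
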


\begin{proof}
    The irreducible cover $\pi\colon Y \to X$ of degree $n$ corresponds via Galois theory to a maximal subgroup $H \subseteq G$ whose index is $[G:H]=n$. We divide the proof into separate cases, depending on the conjugacy class of $G\subseteq\Sy_n$. \Cref{lemma: under mild hypothesis G contains An} states that $G$ is the symmetric group $\Sy_n$, or the alternating subgroup $\mathcal{A}_n$, or the exceptional group $\operatorname{PGL}(2, \F_5)$. For $n =3$, we proved in \Cref{remark: degree 3 Galois case final death} that $G \simeq \Sy_3$.
    Let $\alpha_1$ be a primitive element for the extension $k(Y)/\pi^\ast k(X)$, let $f$ be the minimal polynomial of $\alpha_1$ over $k(X)$, and recall that the action of $G$ on the roots of $f$ determines the embedding $G \subseteq \Sy_n$ and vice versa. %

    \smallskip

    \emph{Case 1.}
    Suppose that $G \subseteq \Sy_n$ is either $\Sy_n$ or $\mathcal{A}_n$. The subgroup $H\subseteq G$ is the stabilizer of $\alpha_1$: it is therefore 
        $\Sy_{n-1} \subseteq \Sy_n$ or $\mathcal{A}_{n-1} \subseteq \mathcal{A}_n$.
        Through the natural action of $G$ on the roots of $f$, the subgroup $H$ fixes $\alpha_1$ while acting transitively on the remaining $(n-1)$ roots (this is not always the case for $n \leq 3$: the relevant exception is $G \simeq \mathcal{A}_3 \simeq \Z/3\Z$ and $H \simeq \mathcal{A}_2 \simeq 1$. We excluded this case in \Cref{remark: degree 3 Galois case final death}). We get the factorization $f(t) = (t-\alpha_1)g(t)$ over $k(Y)$, where $g(t)$ is an irreducible polynomial of degree $(n-1)$.
        \Cref{lemma: L tensor L over K} translates the decomposition of $f$ into the decomposition of the étale $k(Y)$-algebra
        \[ k(Y)\otimes_{k(X)}k(Y) \simeq k(Y) \times L, \]
        where $L/k(Y)$ is a field extension of degree $(n-1)$.
        Via the correspondence between étale algebras and branched coverings of \cite[Theorem 6.16]{cadoret}, the trivial extension $k(Y)/k(Y)$ corresponds to the trivial cover $\operatorname{id}_Y\colon Y \to Y$, while $L/k(Y)$ corresponds to a connected branched cover $Z_2 \to Y$ of degree $(n-1)$. An application of the Riemann-Hurwitz formula to $\beta\colon Z_2 \to Y$ reveals that
        \[ g(Z_2)-1 = (n-1)(g(Y)-1) + \tfrac{1}{2}\cdot R_\beta > g(Y) -1, \]
        since $(n-1)\geq 2$ (here $R_\beta$ is the degree of the ramification divisor of $\beta$).

        \smallskip
        
        \emph{Case 2.} Let $G\simeq \operatorname{PGL}(2, \F_5)$. With the help of the computer algebra system \textsc{Magma} we compute the orbit decomposition of the natural action of all maximal subgroups $H \subseteq G$ with index $n=6$ on a set with $6$ elements. From the output of the code in \cite{OurScripts} we conclude that $f(t)$ factors as a product $(t-\alpha_1)g(t)$ over $k(Y)$, where $g(t)$ is an irreducible polynomial of degree $5$. The same conclusion of Case 1 follows.
\end{proof}


\begin{corollary}
\label{corollary: Z has two connected components}
    The product $Y\times_XY$ in \Cref{definition: Z} has two connected components.
\end{corollary}
\begin{proof}
    Notice that $\Delta Y$ (the image of $Y$ via $\id_Y, \, \iota$) is always a connected component defined over $k$, which does not coincide with the whole space. After base-change, the number of connected components can only increase. Since there are already at least 2 connected components, we can assume that $k = \C$.
    The statement follows from \Cref{lemma: compute r in the generic case} and the isomorphism $r$ between the fiber products of \Cref{definition: Z} and \ref{definition: fiber product not askew}:
    \begin{equation*}
    \begin{tikzcd}
        Y \dar["\pi"] & Y \times_XY \dar\lar & Y\times_XY \dar\lar["r"] \\
        X & Y \lar["\pi"] & Y \lar["\iota"].
    \end{tikzcd}
    \end{equation*}
\end{proof}

\section{A criterion for fitting in a Galois diagram}
{\label{section: fitting in a Galois diagram}}
We present a criterion for establishing whether a given algebraic correspondence has a core.
This notion was introduced in \cite{Raju} and further studied in \cite{Joel}.

\begin{definition}
\label{definition: fits in a Galois diagram}
    We say that a correspondence between curves $\alpha\colon Y \leftarrow Z \to W$ over a field $k$ \emph{has a core} if the intersection $\alpha_Y^\ast k(Y) \cap \alpha_W^\ast k(W)$ has finite index in $k(Z)$.
\end{definition}

\begin{remark}
\label{remark: having a core is a geometric property}
    Having a core is a geometric property, as it is equivalent to $\alpha_Y^\ast k(Y) \cap \alpha_W^\ast k(W)$ having transcendence degree 1 over $k$. Specifically, if $\alpha_Y^\ast \bar k(Y) \cap \alpha_W^\ast \bar k(W)$ has finite index in $\bar k(Z)$ then $\alpha_Y^\ast k(Y) \cap \alpha_W^\ast k(W)$ has finite index in $k(Z)$.
    
    We thus replace \( k \subseteq \C \) by \( k=\C \) for the remainder of the section, and assume that \( \alpha \) is a correspondence between smooth compact Riemann surfaces, or equivalently smooth complex algebraic curves.
    We will freely pass between the analytic and algebraic viewpoints, using the identification provided by the GAGA principle~\cite[Appendix~B, Theorem~2.1]{Hartshorne}.

\end{remark}

A correspondence with a core fits in a Galois diagram.
If the index is finite, the intersection has transcendence degree one over $k$ and it is the function field $k(P)$ of a curve $P$. The containment diagram between function fields translates into the following commutative diagram of curves
\begin{equation}
{\label{diagram: small field theory descending lemma under Galois hp}}
\begin{tikzcd}[column sep={1.3cm,between origins},row sep={1.3cm,between origins}]
    & Z \arrow[dr, "\alpha_W"] \arrow[dl, "\alpha_Y"'] & \\
            Y \arrow[dr, "f_Y"'] & & W \arrow[dl, "f_W"] \\
    & P &
\end{tikzcd}
\end{equation}
The curve $P$ is the push-out of the correspondence diagram in the category of curves and, as such, it might not exist (see \Cref{example: correspondence degree 3}). We can replace $P$ with a genus zero curve by composing at the bottom of diagram \Cref{diagram: small field theory descending lemma under Galois hp} with any non-constant map $P \to \PP^1$.
We can replace $Z$ with the Galois closure $\bar{Z} \to P$ of $Z \to P$ to get the desired Galois diagram.

\begin{remark}
\label{remark: extending a correspondence}
When we compose at the top of the diagram with a map $f\colon Z' \to Z$, we get a new correspondence $Y \leftarrow Z' \to W$ that induces the same map $\Jac W \to \Jac Y$ up to isogeny: indeed, we have
    \[
    (\alpha_Y f)_\ast \circ (\alpha_W f)^\ast
    = (\alpha_Y)_\ast \circ f_\ast \circ f^\ast \circ (\alpha_W)^\ast
    = (\alpha_Y)_\ast \circ [\deg f]_{\Jac(Z)} \circ (\alpha_W)^\ast.
    \]
\end{remark}

\begin{remark}
\label{remark: genus at least 2 and core implies Galois}
    If the correspondence $\alpha$ has a core, it fits in a Galois diagram.
    Conversely, if $g(Z) \geq 2$ and the correspondence $\alpha$ is dominated by a correspondence $\alpha'$ with both maps Galois, then $\alpha$ has a core. There are two subgroups $H_Y, \, H_W \subseteq \Aut(Z')$ such that $Z'/H_Y = X$ and $W = Z'/H_W$. The smallest subgroup containing both is 
    $G = \langle H_Y,\, H_W \rangle \subseteq \Aut(Z')$.
    Since $\Aut(Z')$ is finite \cite[Section 22.7.9]{foag}, so is $G$. It follows that $Z' \to Z'/G=P$ is finite.
\end{remark}

Suppose that the correspondence
\(
    \alpha \colon X \leftarrow Z \rightarrow W
\)
admits a core, and let $P$ be as in diagram
\Cref{diagram: small field theory descending lemma under Galois hp}.
Let $P'$ denote the push-out of the correspondence at the level of underlying topological spaces:
\begin{equation*}
\label{eq: topological pushout of correspondence}
P' = |X| \sqcup |W| \big/ \{\, (\alpha_X(z),\, \alpha_W(z)) \mid z \in Z \,\}.
\end{equation*}
By the universal property of the push-out, there is a factorization
\(
|Z| \rightarrow P' \rightarrow |P|.
\)
Fix a point $z \in Z$ and define inductively the subsets
\[
\mathcal{F}_0(z) := \{z\}, \qquad
\mathcal{F}_{i+1}(z)
:= \alpha_X^{-1}\!\bigl(\alpha_X(\mathcal{F}_i(z))\bigr)
   \cup
   \alpha_W^{-1}\!\bigl(\alpha_W(\mathcal{F}_i(z))\bigr).
\]
All points in
\(
\mathcal{F}(z) := \bigcup_{i \ge 0} \mathcal{F}_i(z)
\)
are identified by the map $|Z| \to P'$, and hence also by the algebraic push-out morphism
$f \colon Z \to P$.
It follows that
\[
\#\mathcal{F}(z) \le \deg f
\qquad \text{for all } z \in Z.
\]
For a subset $S \subseteq Z$ of points, we define
$\mathcal{F}_n(S)$ and $\mathcal{F}(S)$ as in
\Cref{eq: definition of chaotic set}, or equivalently by
\(
\mathcal{F}_n(S) := \bigcup_{s \in S} \mathcal{F}_n(s).
\)
A finite subset of the form $\mathcal{F}(S)$ is called a \emph{clump}~\cite[Definition~9.1]{Raju}.

\begin{proposition}
    \label{prop: theorem D in the complex setting}
    Let $\alpha\colon X \leftarrow Z \to W$ be a correspondence between smooth compact Riemann surfaces. If $\alpha$ has a core, then there is a positive integer $m$ such that all points $z \in Z$ satisfy the inequality 
    \begin{equation}
    {\label{eq: criterion inequality}}
        \#\mathcal{F}(z) \leq m,
    \end{equation}
    If there is a positive integer $m$ such that inequality \Cref{eq: criterion inequality} holds for infinitely many $z \in Z$, then $\alpha$ has a core.
\end{proposition}

We divide the proof of \Cref{prop: theorem D in the complex setting} in two lemmas.
The first one states that the set of points satisfying an inequality of the form \eqref{eq: criterion inequality} for a fixed $m$, is either finite or the whole curve. 

\begin{lemma}
{\label{lemma: Fc is Zariski closed}}
    Fix an integer $M>0$. The set of points
    \[
        C = \{ z \in Z \mid \#\mathcal{F}(z) \leq M \} \subseteq Z
    \]
    is Zariski closed.
\end{lemma}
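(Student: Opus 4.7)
The plan is to realize each $\mathcal{F}_n(z)$ as the set-theoretic fiber of a finite algebraic morphism, invoke semicontinuity of fiber cardinalities to deduce that each level set $\{z : \#\mathcal{F}_n(z) \leq M\}$ is Zariski closed, and then collapse the resulting descending chain using Noetherianity of $Z$.

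\textbf{Step 1: algebraic correspondences capturing $\mathcal{F}_n$.} Write $R_X = Z \times_X Z$ and $R_W = Z \times_W Z$ for the obvious closed subvarieties of $Z \times Z$; both projections to the first factor are finite. Define $T_n \subseteq Z \times Z$ inductively by $T_0 = \Delta_Z$ and $T_{n+1} = (R_X \circ T_n) \cup (R_W \circ T_n)$, where composition of correspondences is $R \circ S = p_{13}(p_{12}^{-1}(S) \cap p_{23}^{-1}(R))$ on $Z \times Z \times Z$. Since $Z$ is proper, $p_{13}$ is proper and each such composition is closed, so every $T_n$ is a closed subvariety of $Z \times Z$. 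An induction on $n$ shows that the first projection $p_1\colon T_n \to Z$ is finite and that its set-theoretic fiber over $z$ is $\{z\} \times \mathcal{F}_n(z)$. Moreover, $\Delta_Z \subseteq R_X$ gives $T_n \subseteq T_{n+1}$, so the sets $\mathcal{F}_n(z)$ grow with $n$.

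\textbf{Step 2: semicontinuity of fiber cardinality.} For a finite morphism $p\colon Y \to Z$ of algebraic varieties, the locus $\{z \in Z \mid \#|p^{-1}(z)| \leq M\}$ is Zariski closed. Applying this standard fact to $p_1\colon (T_n)_{\mathrm{red}} \to Z$ gives that
\[
    C_n := \{z \in Z \mid \#\mathcal{F}_n(z) \leq M\}
\]
is closed in $Z$.

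\textbf{Step 3: Noetherian collapse.} From $\mathcal{F}_n(z) \subseteq \mathcal{F}_{n+1}(z)$ we get $C_{n+1} \subseteq C_n$, and from $\mathcal{F}(z) = \bigcup_n \mathcal{F}_n(z)$ we get $\#\mathcal{F}(z) = \sup_n \#\mathcal{F}_n(z)$, hence $C = \bigcap_{n \geq 0} C_n$. The $C_n$ are a descending chain of closed subsets of the Noetherian topological space $Z$, so the chain stabilizes: $C = C_N$ for some $N$, and $C$ is Zariski closed.

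The main obstacle is Step 1: one must check that composing the correspondences really recovers the iterative set-theoretic definition of $\mathcal{F}_n$, and that the possibly reducible, possibly non-reduced variety $T_n$ still supports the semicontinuity input — passing to $(T_n)_{\mathrm{red}}$ handles the latter cleanly. Once the correspondence setup is in place, Step 2 and Step 3 are essentially automatic; the Noetherian stabilization is the crucial trick that converts an a priori countable intersection of Zariski closed sets (which need not be closed) into a single closed subset.
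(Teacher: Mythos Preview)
Your approach is essentially the paper's: construct a closed subscheme of $Z\times Z$ whose first-projection fibres realise $\mathcal{F}_n(z)$, invoke semicontinuity, and intersect. Your $T_n$ is literally the paper's $Z_n$ (the paper indexes by ``paths'' $\gamma\in\{\alpha_X,\alpha_W\}^n$ and takes the union of the projected zig-zag fibre products, which is exactly what iterated correspondence composition produces).

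There is, however, a genuine gap in Step~2. The assertion ``for a finite morphism $p\colon Y\to Z$ of algebraic varieties, $\{z:\#|p^{-1}(z)|\le M\}$ is Zariski closed'' is \emph{false} as stated. Take $Z=\mathbb{A}^1$ and $Y=\mathbb{A}^1\sqcup\{0\}$ with the obvious map; then $\#|p^{-1}(z)|=1$ for $z\neq 0$ and $=2$ at $0$, so $\{z:\#|p^{-1}(z)|\le 1\}$ is the open complement of a point. Lower semicontinuity of the number of geometric points in fibres requires more than finiteness: one needs every irreducible component of the source to dominate $Z$ (equivalently, flatness, since $Z$ is a smooth curve). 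The paper makes exactly this check: it argues that each zig-zag fibre product has all components dominating $Z$, hence so does their projected union $Z_n$, hence $h\colon Z_n\to Z$ is flat over the Dedekind base $Z$, and only then cites the semicontinuity lemma. Your construction does satisfy this property (each $R_X,R_W$ is finite flat over $Z$, and the inductive composition preserves the condition that every component dominates), but you must say so; without it, Step~2 does not go through.

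A smaller point: your Step~3 is correct but the commentary around it is off. An arbitrary intersection of closed sets is closed in any topology, so $C=\bigcap_n C_n$ is closed immediately; the paper simply writes this. Noetherian stabilisation is not needed, and the parenthetical ``(which need not be closed)'' is wrong.
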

\begin{proof}
    We reduce to proving that, for any integer $n \geq 0$, the set
    \[
        C_n = \{z \in Z \mid \#\mathcal{F}_n(z)\leq M \}
    \]
    is Zariski closed. Indeed, this implies that $C = \bigcap_n C_n$ is also closed.
    We construct a proper algebraic curve $Z_n$ and a flat (proper) morphism $h \colon Z_n \to Z$ such that, for any $z \in Z$, one has $\# h^{-1}(z) = \# \mathcal{F}_n(z)$.
    The statement then follows from the lower semi-continuity of $z \mapsto \# h^{-1}(z)$ \cite[\href{https://stacks.math.columbia.edu/tag/0BUI}{Lemma 0BUI}]{stacks-project}.
    
    A \emph{path $\gamma$ of length $n$} is an element of the set $\{\alpha_X, \, \alpha_W\}^n$, which we can represent as a zig-zag diagram. For example, $\gamma=(\alpha_X, \alpha_X, \alpha_W)$ is a path of length $n=3$, which we can depict as the diagram
    \begin{equation*}
    {\label{diagram: zig-zag diagram}}
    \begin{tikzcd}[column sep = small]
        Z \arrow[dr, "\alpha_X"] && \arrow[dr, "\alpha_X"] \arrow[dl, "\alpha_X"] Z && \arrow[dl, "\alpha_X"] Z \arrow[dr, "\alpha_W"] && \arrow[dl, "\alpha_W"] Z \\
        & X && X && W. &
    \end{tikzcd}
    \end{equation*}
    Let $Z_\gamma$ be the limit of the zig-zag diagram associated with $\gamma$ in the category of schemes, which we can also define as the subschemes
    \[
        \{(z_0, \dots, z_n) \in Z^{n+1} \mid \gamma_i(z_i)=\gamma_i(z_{i-1}) \quad \forall i =1, \dots, n \} \subseteq Z^{n+1}.
    \]
    Since we get $Z_\gamma$ as the iterated fiber product of branched covers of curves, it has pure dimension 1 and every irreducible component dominates $Z$ (through the projection onto the first factor).  
    Let $Z_\gamma^1 \subseteq Z \times Z$ be the projection of $Z_\gamma$ onto the first and last factors. Notice that every connected component of $Z_\gamma^1$ dominates the first component $Z$. In particular $\dim Z_\gamma^1 = 1$.
    Let
    \[ \textstyle Z_n = \bigcup_\gamma Z_\gamma^1 \subseteq Z \times Z \]
    be the union over all paths $\gamma$ of length $n$: we take the reduced structure on the set-theoretic union. Since we are taking a finite union of curves, $Z_n$ is a (singular) curve.
    The projection $h\colon Z_n \subseteq Z \times Z \to Z$ onto the first component is a flat morphism by \cite[Chapter 4, Proposition 3.9]{Liu}: $Z$ is connected and smooth (thus regular, thus Dedekind \cite[Chapter 4, Example 2.9]{Liu}) and every irreducible component of $Z_n$ dominates $Z$ (by construction).
    Furthermore the fiber $h^{-1}(z)$ of a point $z \in Z$ is the set of pairs $(z_0, z_{n})$ such that $z_0=z$ and there is a path $\gamma$ of length $n$ connecting $z_0 \mapsto z_{n}$. By definition, the set of such $z_n$ is the set $\mathcal{F}_n(z)$. Hence $\# h^{-1}(z) = \# \mathcal{F}_n(z)$.
\end{proof}

We establish the other implication of \Cref{theorem: criterion} via complex-analytic methods. We note that \cite[Theorem 9.6]{Raju} yields a stronger result in this direction: it suffices to find two distinct non-ramified points with $\mathcal{F}(z_1) \neq \mathcal{F}(z_2)$ to conclude that $\alpha$ has a core. Since our approach proceeds by a different strategy, we include the argument here for completeness.

\begin{lemma}
{\label{lemma: sequence of open sets}}
    Let $Z, \, X$, and $W$ be smooth connected (not necessarily compact) Riemann surfaces.
    Let $f\colon Z \to X$ and $g\colon Z \to W$ be holomorphic maps with no ramification points. Fix an integer $n \geq 0$. Let $z_0, \, z_1, \, \dots, \, z_n \in Z$ be a sequence of points such that
    \begin{equation}
        {\label{eq: weird set-theoretic condition for the sequence}}
        f(z_i)=f(z_{i-1}) \quad\text{or}\quad g(z_i)=g(z_{i-1}) \qquad\forall\, i = 1, \, \dots,\, n. 
    \end{equation}
    Then, there exists a sequence of open connected neighborhoods $z_i \in U_i \subseteq Z$ such that:
    \begin{itemize}
        \item if $f(z_i)=f(z_{i-1})$, then $f^{-1}f\colon U_{i-1} \to U_i$ is an isomorphism (of Riemann surfaces);
        \item if $f(z_i)\neq f(z_{i-1})$, then $g(z_i)=g(z_{i-1})$ and $g^{-1}g\colon U_{i-1} \to U_i$ is an isomorphism.
    \end{itemize}
    Furthermore, given two sequences $(z_i)_i$ and $(z_i')_i$ such that $z_0=z_0'$ and $z_n \neq z_n'$, we can choose sequences $(U_i)_i$ and $(U_i')_i$ with the property above and such that $U_n\cap U_n' = \emptyset$.
\end{lemma}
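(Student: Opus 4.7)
The key observation is that since $f$ and $g$ are unramified, they are local biholomorphisms, and the conditions in \Cref{eq: weird set-theoretic condition for the sequence} let us define local ``transport'' biholomorphisms that move small neighborhoods of $z_{i-1}$ onto small neighborhoods of $z_i$. A single sufficiently small connected neighborhood $U_0$ of $z_0$ can then be pushed forward along the composite of these transports to produce the full sequence $(U_i)$.

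Concretely, I would first choose, for each $i$, open neighborhoods $V_i^- \ni z_{i-1}$ and $V_i^+ \ni z_i$ on which $f$ (in the first case of the conclusion) or $g$ (in the second case) is a biholomorphism onto its image in $X$ or $W$. Shrinking further, one may arrange that the two images coincide. The composite $\phi_i := (f|_{V_i^+})^{-1} \circ (f|_{V_i^-})\colon V_i^- \to V_i^+$ (or its $g$-analogue) is then a biholomorphism with $\phi_i(z_{i-1}) = z_i$, and it agrees with the ``$f^{-1}f$'' (or ``$g^{-1}g$'') prescription in the statement of the lemma.

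Next, I would pick a coordinate ball $U_0$ around $z_0$ small enough to be contained in the domain of the composite $\phi_n \circ \cdots \circ \phi_1$; this is an open neighborhood of $z_0$ since it is a finite intersection of preimages, under local biholomorphisms, of open sets. Setting $U_i := \phi_i(U_{i-1})$ inductively yields open connected neighborhoods of $z_i$, and each $\phi_i\colon U_{i-1} \to U_i$ is the required isomorphism. For the disjointness claim, given two sequences with $z_n \neq z_n'$, by Hausdorffness of $Z$ I would pick disjoint open neighborhoods $N_n \ni z_n$ and $N_n' \ni z_n'$, then further shrink $U_0$ and $U_0'$ so that $U_0 \subseteq (\phi_n \circ \cdots \circ \phi_1)^{-1}(N_n)$ and analogously for the primed sequence. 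Both constraints cut out open neighborhoods of $z_0 = z_0'$, so such shrinkings exist, and they force $U_n \subseteq N_n$, $U_n' \subseteq N_n'$, hence $U_n \cap U_n' = \emptyset$.

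I do not expect any serious obstacle: the entire argument is a bookkeeping exercise built on top of the fact that unramified holomorphic maps of Riemann surfaces are local biholomorphisms. The only mild care required is to verify that each successive shrinking is still an open neighborhood of the relevant base point, which is immediate because compositions and preimages of local biholomorphisms of open subsets are open. Connectedness of the $U_i$ is preserved automatically once $U_0$ is taken to be a coordinate ball, since each $\phi_i$ is a biholomorphism of open subsets.
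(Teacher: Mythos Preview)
Your proposal is correct and follows essentially the same approach as the paper: both arguments use that unramified holomorphic maps are local biholomorphisms to build transport isomorphisms between small neighborhoods, then shrink so that everything is compatible. The only cosmetic difference is that the paper organizes the shrinking as an induction on $n$ (extending the chain one step and pulling the new, smaller neighborhood back through the existing isomorphisms), whereas you compose all the transport maps first and choose $U_0$ small enough in one shot.
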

\begin{proof}
    We prove the statement by induction on $n$.
    For $n=0$ there is nothing to prove.
    Fix a sequence $z_0, \, \dots, \, z_n$.
    Applying the inductive hypothesis to $z_0, \, \dots, \, z_{n-1}$ we get isomorphic connected neighborhoods $V_0, \, \dots, \, V_{n-1}$ with the property in the statement.
    Up to swapping the roles of $f$ and $g$, we can assume that $f(z_n) = f(z_{n-1})$. Since $f$ is holomorphic, it is an open map \cite[Corollary 3.2.3]{Tamas}; therefore $f(V_{n-1}) \subseteq X$ is open. Let $T \subseteq f(V_{n-1})$ be a trivializing connected open neighborhood for the cover $f$. Let $U_n$ be the sheet over $T$ containing $z_n$ and $U_{n-1}$ the sheet containing $z_{n-1}$. By construction, $U_{n-1} \subseteq V_{n-1}$. Thus the chain of isomorphisms connecting $V_{n-1}$ to $V_0$ defines isomorphic connected subsets $U_i \subseteq V_i$ for all $i \leq n-1$. Furthermore, $U_{n-1}$ and $U_n$ are isomorphic.
    
    For the last part of the statement, notice that it is enough to construct the two sequences simultaneously and, if necessary, restrict the trivializing neighborhoods further to make them disjoint.
\end{proof}

\begin{proof}[Proof of \Cref{prop: theorem D in the complex setting}]
    Since we have already discussed the first part of the statement in the paragraph preceding the statement of the proposition, we will focus on the second part.
    \Cref{lemma: Fc is Zariski closed} states that the set of points $z \in Z$ for which inequality \Cref{eq: criterion inequality} holds is a Zariski closed subset $C \subseteq Z$. As $Z$ is an irreducible connected curve, and any infinite closed subscheme $V \subseteq Z$ must be one-dimensional, we conclude that $C=Z$.
    
    Let $d$ be the smallest integer such that
    $C_d = \{ z \in Z \mid \#\mathcal{F}(z) \leq d \}$ is infinite (hence equals to $Z$). Then all but finitely many points of $Z$ satisfy $\#\mathcal{F}(z) = d$ (i.e.~all points in $Z^\circ =  C_d- C_{d-1}$).
    Let $\beta \colon |Z| \to P$ be the topological push-out map, as in \Cref{eq: topological pushout of correspondence}.
    We claim that the restriction of $\beta$ to $|Z^\circ| \subseteq |Z|$ gives a complex structure to $P^\circ = \beta(|Z^\circ|) \subseteq P$ for which $\beta\colon Z^\circ \to P^\circ$ is holomorphic. It follows that its completion $\beta\colon Z \to P$ is a branched cover of compact Riemann surfaces, as in \cite[Proposition 3.2.9]{Tamas}.
    
    This proves that $\alpha$ has a core, hence it suffices to prove the claim. Fix a point $z \in Z^\circ$, for which $\#\mathcal{F}(z) = d$, and a positive integer $n$ such that $\#\mathcal{F}_n(z) = \#\mathcal{F}(z)$.
    For each $y \in \mathcal{F}_n(z)$ fix a sequence $(z_i)_i$ such that $z_0=z$ and $z_n = y$. \Cref{lemma: sequence of open sets} gives connected open subsets $z \in U_0(y)$, $y \in U_n(y)$ that are isomorphic through a chain of covers. Furthermore, we can assume all $U_n(y)$ to be disjoint.
    Any point $z'$ in the intersection $V = \bigcap_y U_0(y)$ has the following property by construction: $\#\left(\mathcal{F}_n(z') \cap U_n(y)\right) = 1$ for any $y \in \mathcal{F}_n(z)$. This implies that
    \[
       \textstyle \mathcal{F}(V) = \mathcal{F}_n(V) = \bigcup_y V_n(y),
    \]
    where $V_n(y)$ is the image of $V$ via the fixed isomorphism $U_0(y) \to U_n(y)$. 
    The push-out of the restriction of $\alpha$ to $\bigcup_y V_n(y)$ corresponds to a single isomorphic copy of $V$.
    This gives a trivializing neighborhood $\beta (V)$ for $\beta(z)\in P$, which inherits a complex structure from any of its sheets (since they are all isomorphic).
    By repeating the argument for every $z \in Z^\circ$, we cover $P^\circ = \beta(|Z^\circ|)$ with trivializing neighborhoods for $\beta$. This cover defines a complex atlas on $P^\circ$ for which $\beta$ is holomorphic.
\end{proof}

\Cref{theorem: criterion} follows from \Cref{prop: theorem D in the complex setting} and \Cref{remark: having a core is a geometric property}.

\section{The Prym correspondence has a core}
\label{section: conclusions}
We use the criterion of \Cref{theorem: criterion} to deduce \Cref{theorem: main theorem} from \Cref{theorem: the correspondence gives a Prym isogeny}.
Denote by $\alpha\colon X \leftarrow Z \to W$ the correspondence in \Cref{eq: correspondence in the UZ setting} with $\alpha_X= \pi\pr_1$ and $\alpha_W=\beta$. 

\begin{corollary}[\Cref{theorem: main theorem}]
    The correspondence $\alpha$ in \Cref{theorem: the correspondence gives a Prym isogeny} fits in a Galois diagram. The core comes from the degree 2 map $x \colon X \to \PP^1$ introduced in \Cref{definition: associate P1 cover phi}.
\end{corollary}
\begin{proof}
    Fix a point $z \in Z(\bar k)$. Consider the set
    \[ F(z) = \alpha_X^{-1}\alpha_X(z) \cup  \alpha_X^{-1}\iota\alpha_X(z) = \{ z' \in Z \mid \alpha_X(z')=\alpha_X(z) \text{ or } \alpha_X(z')=\iota\alpha_X(z) \}. \]
    We claim that we have $\mathcal{F}_n(z) \subseteq F(z)$ for all $n \geq 0$.
    Since $\mathcal{F}_0(z) \subseteq F(z)$, it is enough to prove that $F(z)$ is invariant under the set-theoretic operations $\alpha_X^{-1}\alpha_X$ and $\alpha_W^{-1}\alpha_W$. The former invariance is trivial.
    For the latter invariance observe that $\alpha_W^{-1}\alpha_W(z')$ is the set $\{ z', \, \eta(z') \}$, where $\eta$ is the involution induced by $Y^2 \to Y^2, \, (y_1, y_2) \mapsto (y_2, y_1)$. We need to show that $\eta(z') \in F(z)$ for any $z' \in F(z)$. This follows from the defining equation \Cref{eq: equatons for W}: indeed this identity implies 
    \[
        \alpha_X(z')= \pi \pr_1(z')=\pi(y_1)=\iota\pi(y_2) = \iota\pi\pr_1\eta(z') = \iota\alpha_X\eta(z').
    \]
    It follows from \Cref{theorem: criterion} that $\alpha$ has a core, since $$\#\mathcal{F}(z) \leq \#\alpha_X^{-1}\alpha_X(z) + \#  \alpha_X^{-1}\iota\alpha_X(z) = 2 \cdot \deg(\alpha_X) = \deg(\alpha_X)\cdot\deg(\alpha_W)$$ for all $z \in Z(\bar k)$.
    Therefore, the correspondence fits in a Galois diagram.
    
    For the second part of the statement, notice that the map from $X$ to the core must identify $\xi \in X(\bar k)$ with $\iota(\xi)$ and no other point. It thus coincides with the map $x\colon X \to \PP^1$ by \Cref{definition: associate P1 cover phi}.
\end{proof}

Finally, we show that not every correspondence between curves has a core. This makes \Cref{conjecture} interesting from a theoretical point of view, in addition to the already established practical utility.

\begin{example}
{\label{example: correspondence degree 3}}
    Let $\pi_X\colon Y \to X$ be the cover of \Cref{example: degree 3}.
    Let $W$ be the complementary curve to $\pi_X$ of \cite[Section 1]{Kuhn}, which comes with a ramified degree-3 cover $\pi_W \colon Y \to W$. We give the maps of the correspondence $\pi\colon X \leftarrow Y \to W$ in terms of the associated $\PP^1$-covers:
    \begin{equation*}
        \phi_X(t) = \frac{t^2}{t^3-4}, \qquad \phi_W(t) = \frac{-16 t^3 + 24 t^2 - 9 t}{256 \cdot(t^3 - 1)}.
    \end{equation*}
    Let $y=(2, s_y) \in Y(\C)$. If $\pi$ has a core, then \Cref{theorem: criterion} implies that $\#\mathcal{F}(y)$ is bounded.
    We get a lower bound on the cardinality of the sets $\mathcal{F}_i(y)$ by looking at the $t$-coordinates of its points: that is, by making the computation for the algebraic correspondence $\phi\colon \PP^1 \leftarrow \PP^1 \to \PP^1$ on the associated $\PP^1$-covers.
    A quick computation with a computer algebra system \cite{OurScripts} yields $$\#\mathcal{F}_{2}(y) \geq 18 \quad \text{and}\quad \#\mathcal{F}_{9}(y) \geq 68 \, 883,$$
    suggesting that the correspondence does not have a core.
\end{example}

\begin{remark}
\label{remark: the monodromy construction does not give the complementary cover directly}
    More generally, let $\pi_X\colon Y \to X$ be an irreducible degree $n$ cover between curves of respective genera $g(Y)=2$ and $g(X)=1$, and denote by $\pi_W\colon Y \to W$ the complementary degree $n$ cover of \cite[Section 1]{Kuhn}.
    We expect that the strategy of \Cref{example: correspondence degree 3} can always be applied to show that the correspondence $\pi\colon X \leftarrow Y \to W$ does not have a core.
\end{remark}

\section{Related questions}
\label{section: related questions}
Several related questions naturally arise from the problems discussed in the previous sections. While some of these perspectives provide theoretical solutions to more general problems, they are often less effective computationally. In contrast, the present work focuses on approaches tailored to split Jacobians, which are computationally more feasible and express the problems in terms of the more approachable geometry of the underlying curves. These considerations provide a broader context in which the main results fit; for completeness, we briefly record them here.


\subsection{Pseudo Brauer relations}
Let $G \subseteq \Aut(\bar{Y})$ be a finite group of automorphisms of a curve $\bar{Y}$. A \emph{Brauer relation} is a formal sum $\sum_i H_i - \sum_j H_j'$ of subgroups $H_i, \, H_j' \subseteq G$ such that the representations $\bigoplus_i \Ind_{H_i}^G(1_{H_i})$ and $\bigoplus_j \Ind_{H_j'}^G(1_{H_j'})$ are isomorphic.

Similarly, a \emph{pseudo Brauer relation} is a formal sum of subgroups that comes from a relaxed isomorphism condition \cite[Definition 3.1 and Definition 4.1]{dokchitser2024parityranksjacobianscurves}. There is a functorial construction \cite[Theorem 4.14]{konstantinou2024galoiscoverscurvesarithmetic} that, given a pseudo Brauer relation $\sum_i H_i - \sum_j H_j'$, yields an isogeny
\begin{equation}
    {\label{eq: isogeny induced by pbr}}
    \textstyle \prod_i \Jac(\bar{Y}/H_i) \sim \prod_j \Jac(\bar{Y}/H_j').
\end{equation}
Conversely, an isogeny between abelian varieties is said to be \emph{pseudo Brauer verifiable} if it can be written in the form \Cref{eq: isogeny induced by pbr} for suitable choices of $\bar{Y}, \, G$, and a pseudo Brauer relation.
This result streamlines the important step, already present in the Galois theory approach \cite[Section 3]{lombardo2020decomposing}, of translating the $G$-representation structure of the tangent space of $\Jac\bar{Y}$ into isogenies.

In this language, \Cref{conjecture} states that the isogeny in \Cref{eq: first instance of corr diagram} is pseudo Brauer verifiable (through a prescribed curve $\bar{Y}$). In these terms, the conjecture has been proven, in the setting of \Cref{section: the problem}, for covers $\pi \colon Y \to X$ whose degree $n$ is a prime number and whose ramification type is generic \cite[Theorem~7.8]{konstantinou2023}. \Cref{theorem: main theorem} extends the results to all covers between curves of genera 2 and 1, independently of the degree and the ramification type.

\subsection{Isogenous factors}

Conjecturally, over a number field $k$, there exists a uniform bound $N$ such that two elliptic curves over $k$ are isogenous if and only if their modulo-$N$ Galois representations are isomorphic \cite{bakker2015freymazurconjecturelowgenus}.
In the setting of \Cref{section: the problem}, the two factors of $\Jac Y$ have isomorphic modulo-$n$
Galois representations, where $n = \deg \pi$.
A natural question is therefore whether the two elliptic curves $\Jac X$ and $\Jac W$ are isogenous.
One of the original motivations for studying split Jacobians was precisely to address this question
\cite{Frey}.

Given two non-isogenous elliptic curves $E$ and $E'$ and an integer $n \geq 2$, it is always possible to
construct a degree-$n$ cover $Y \to E$ between curves of respective genera $2$ and $1$ that induces an
isogeny $\Jac Y \sim E \times E'$ \cite{MR1085258}.
Moreover, the generic point of the moduli space of two-dimensional split Jacobians corresponds to a product
of two non-isogenous elliptic curves \cite{Kani1997}.
Thus, in general, the two factors are not isogenous.

Given a cover $Y \to X$ as in \Cref{section: the problem}, deciding whether the two elliptic factors of
$\Jac Y$ are isogenous can be done using standard methods.
For instance, one may first obtain explicit bounds on the degree $N$ of a possible
isogeny \cite{MR1217345}, and then test whether the pair of $j$-invariants is a zero of a modular polynomial
of level at most $N$.
This strategy is employed in \cite{MR4932442} to compute models for special families in the moduli space of
$n$-split Jacobians with $N$-isogenous factors, for small values of $n$ and $N$.

\subsection{Explicit Poincaré decomposition}
Let $A$ be an abelian variety defined over the number field $k \subseteq \C$.  
The Poincaré complete reducibility theorem asserts that $A$ is isogenous (over $k$) to a product of simple abelian varieties, \( A \sim A_1 \times \cdots \times A_r \).
Several approaches exist for making this decomposition effective.
Although our work focuses on methods tailored to {split Jacobians}, for which more specialized techniques are both more efficient computationally and more transparent theoretically, we report them for completeness.

Once a basis of holomorphic differential $1$-forms $\{\omega_i\}_i$ and a symplectic basis of integral homology $\{\gamma_j\}_j$ are fixed, the \emph{period matrix} of $A$ is the complex matrix with entries
\[
\Pi_{ij} \;=\; \int_{\gamma_j} \omega_i.
\]
One can exploit this transcendental data to determine the Poincaré decomposition.
Indeed, one can compute the geometric endomorphism ring $\End(A_{\bar{k}})$ following~\cite{MR3882288, MR3904148, MR4280568}, and then deduce the decomposition of $A$ from the structure of its endomorphism algebra~\cite{WedderburnDecomposition}.
In the setting of \Cref{section: the problem}, it is easy to compute the period matrix of $\Jac Y$ explicitly and apply the above strategy. 

A more direct method, due to~\cite{RODRIGUEZ2025364}, attempts to find a change of basis of the $\{\omega_i\}_i$ and $\{\gamma_j\}_j$ that places the period matrix $\Pi$ in a block-diagonal form. This search can be reduced to solving a system of equations over the rational numbers. In turn, any such change of basis provides partial information on the endomorphism algebra, specifically yielding the desired idempotents.
This approach reduces the decomposition problem to solving a Diophantine equation: while finding rational solutions is easy, proving the non-existence of solutions can be extremely difficult.
This is not an obstacle when $A$ is known to be split, as in the setting of \Cref{section: the problem} with $A=\Jac Y$, since one is guaranteed to keep finding rational solutions until $A$ is fully decomposed.

\bibliographystyle{alpha}
\bibliography{sample}

@article {Kuhn,
    AUTHOR = {Kuhn, Robert M.},
     TITLE = {Curves of genus {$2$} with split {J}acobian},
   JOURNAL = {Trans. Amer. Math. Soc.},
  FJOURNAL = {Transactions of the American Mathematical Society},
    VOLUME = {307},
      YEAR = {1988},
    NUMBER = {1},
     PAGES = {41--49},
      ISSN = {0002-9947},
   MRCLASS = {14H40 (11G10)},
  MRNUMBER = {936803},
MRREVIEWER = {H.-G. R\"{u}ck},
       DOI = {10.2307/2000749},
       URL = {https://doi.org/10.2307/2000749},
}

@article {lombardo2020decomposing,
    AUTHOR = {Lombardo, Davide and Lorenzo Garc\'{\i}a, Elisa and Ritzenthaler,
              Christophe and Sijsling, Jeroen},
     TITLE = {Decomposing {J}acobians via {G}alois covers},
   JOURNAL = {Exp. Math.},
  FJOURNAL = {Experimental Mathematics},
    VOLUME = {32},
      YEAR = {2023},
    NUMBER = {1},
     PAGES = {218--240},
      ISSN = {1058-6458},
   MRCLASS = {14H40 (11R32 14H45 14L30)},
  MRNUMBER = {4574430},
MRREVIEWER = {Daniel Valli\`eres},
       DOI = {10.1080/10586458.2021.1926008},
       URL = {https://doi.org/10.1080/10586458.2021.1926008},
}

@book {mir,
    AUTHOR = {Miranda, Rick},
     TITLE = {Algebraic curves and {R}iemann surfaces},
    SERIES = {Graduate Studies in Mathematics},
    VOLUME = {5},
 PUBLISHER = {American Mathematical Society, Providence, RI},
      YEAR = {1995},
     PAGES = {xxii+390},
      ISBN = {0-8218-0268-2},
   MRCLASS = {14Hxx (14-01 30F99)},
  MRNUMBER = {1326604},
MRREVIEWER = {R. F. Lax},
       DOI = {10.1090/gsm/005},
       URL = {https://doi.org/10.1090/gsm/005},
}

@book {Hartshorne,
    AUTHOR = {Hartshorne, Robin},
     TITLE = {Algebraic geometry},
    SERIES = {Graduate Texts in Mathematics, No. 52},
 PUBLISHER = {Springer-Verlag, New York-Heidelberg},
      YEAR = {1977},
     PAGES = {xvi+496},
      ISBN = {0-387-90244-9},
   MRCLASS = {14-01},
  MRNUMBER = {463157},
MRREVIEWER = {Robert Speiser},
}

@article {Shaska,
    AUTHOR = {Shaska, Tony},
     TITLE = {Curves of genus 2 with {$(N,N)$} decomposable {J}acobians},
   JOURNAL = {J. Symbolic Comput.},
  FJOURNAL = {Journal of Symbolic Computation},
    VOLUME = {31},
      YEAR = {2001},
    NUMBER = {5},
     PAGES = {603--617},
      ISSN = {0747-7171},
   MRCLASS = {14H45 (14H30 14H40)},
  MRNUMBER = {1828706},
       DOI = {10.1006/jsco.2001.0439},
       URL = {https://doi.org/10.1006/jsco.2001.0439},
}

@incollection {Frey,
    AUTHOR = {Frey, Gerhard and Kani, Ernst},
     TITLE = {Curves of genus 2 with elliptic differentials and associated
              {H}urwitz spaces},
 BOOKTITLE = {Arithmetic, geometry, cryptography and coding theory},
    SERIES = {Contemp. Math.},
    VOLUME = {487},
     PAGES = {33--81},
 PUBLISHER = {Amer. Math. Soc., Providence, RI},
      YEAR = {2009},
   MRCLASS = {14G32 (11G30 14G25 14H30)},
  MRNUMBER = {2555986},
MRREVIEWER = {Meirav Topol-Amram},
       DOI = {10.1090/conm/487/09524},
       URL = {https://doi.org/10.1090/conm/487/09524},
}

@incollection{MR1363491,
author = {Frey, Gerard},
title = {On elliptic curves with isomorphic torsion structures and corresponding curves of genus 2},
     BOOKTITLE = {Elliptic curves, modular forms, \& {F}ermat's last theorem},
    SERIES = {Series in Number Theory, I},
    EDITOR = {Coates, John and Yau, S.T.},
 PUBLISHER = {International Press, Cambridge, MA},
      YEAR = {1995},
     PAGES = {79--98},
      ISBN = {1-57146-026-8},
   MRCLASS = {11-06 (11D41 11Fxx 11G05)},
  MRNUMBER = {1363491},
}

@article {magaard2012genus,
    AUTHOR = {Magaard, Kay and Shaska, Tanush and V\"{o}lklein, Helmut},
     TITLE = {Genus 2 curves that admit a degree 5 map to an elliptic curve},
   JOURNAL = {Forum Math.},
  FJOURNAL = {Forum Mathematicum},
    VOLUME = {21},
      YEAR = {2009},
    NUMBER = {3},
     PAGES = {547--566},
      ISSN = {0933-7741},
   MRCLASS = {14H45 (14H10 14H30 14H52)},
  MRNUMBER = {2526800},
MRREVIEWER = {James N. Brawner},
       DOI = {10.1515/FORUM.2009.027},
       URL = {https://doi.org/10.1515/FORUM.2009.027},
}

@incollection {MR1085258,
    AUTHOR = {Frey, Gerhard and Kani, Ernst},
     TITLE = {Curves of genus {$2$} covering elliptic curves and an
              arithmetical application},
 BOOKTITLE = {Arithmetic algebraic geometry ({T}exel, 1989)},
    SERIES = {Progr. Math.},
    VOLUME = {89},
     PAGES = {153--176},
 PUBLISHER = {Birkh\"{a}user Boston, Boston, MA},
      YEAR = {1991},
   MRCLASS = {14G40 (11G05)},
  MRNUMBER = {1085258},
MRREVIEWER = {Joseph H. Silverman},
       DOI = {10.1007/978-1-4612-0457-2\_7},
       URL = {https://doi.org/10.1007/978-1-4612-0457-2_7},
}

@book {Tamas,
    AUTHOR = {Szamuely, Tam\'{a}s},
     TITLE = {Galois groups and fundamental groups},
    SERIES = {Cambridge Studies in Advanced Mathematics},
    VOLUME = {117},
 PUBLISHER = {Cambridge University Press, Cambridge},
      YEAR = {2009},
     PAGES = {x+270},
      ISBN = {978-0-521-88850-9},
   MRCLASS = {14H30 (11-02 12F10 14G32)},
  MRNUMBER = {2548205},
MRREVIEWER = {Swaminathan Subramanian},
       DOI = {10.1017/CBO9780511627064},
       URL = {https://doi.org/10.1017/CBO9780511627064},
}

@article {Hayashida1,
    AUTHOR = {Hayashida, Tsuyoshi and Nishi, Mieo},
     TITLE = {Existence of curves of genus two on a product of two elliptic
              curves},
   JOURNAL = {J. Math. Soc. Japan},
  FJOURNAL = {Journal of the Mathematical Society of Japan},
    VOLUME = {17},
      YEAR = {1965},
     PAGES = {1--16},
      ISSN = {0025-5645},
   MRCLASS = {14.51 (14.40)},
  MRNUMBER = {201434},
MRREVIEWER = {Kenneth F. Ireland},
       DOI = {10.2969/jmsj/01710001},
       URL = {https://doi.org/10.2969/jmsj/01710001},
}

@article {Hayashida2,
    AUTHOR = {Hayashida, Tsuyoshi},
     TITLE = {A class number associated with the product of an elliptic
              curve with itself},
   JOURNAL = {J. Math. Soc. Japan},
  FJOURNAL = {Journal of the Mathematical Society of Japan},
    VOLUME = {20},
      YEAR = {1968},
     PAGES = {26--43},
      ISSN = {0025-5645},
   MRCLASS = {10.68 (14.00)},
  MRNUMBER = {233804},
MRREVIEWER = {M. Miwa},
       DOI = {10.2969/jmsj/02010026},
       URL = {https://doi.org/10.2969/jmsj/02010026},
}

@misc{stacks-project,
  author       = {The {Stacks project authors}},
  title        = {The Stacks project},
  howpublished = {\url{https://stacks.math.columbia.edu}},
  year         = {2024},
}

@article {MR1962943,
    AUTHOR = {Kani, Ernst},
     TITLE = {Hurwitz spaces of genus 2 covers of an elliptic curve},
   JOURNAL = {Collect. Math.},
  FJOURNAL = {Universitat de Barcelona. Collectanea Mathematica},
    VOLUME = {54},
      YEAR = {2003},
    NUMBER = {1},
     PAGES = {1--51},
      ISSN = {0010-0757},
   MRCLASS = {14H10 (14D22)},
  MRNUMBER = {1962943},
MRREVIEWER = {H. Lange},
}

@book {foag,
    AUTHOR = {Vakil, Ravi},
     TITLE = {The rising sea---foundations of algebraic geometry},
 PUBLISHER = {Princeton University Press, Princeton, NJ},
      YEAR = {[2025] \copyright 2025},
     PAGES = {xxiii+662},
      ISBN = {978-0-691-26866-8; 978-0-691-26867-5; 978-0-691-26868-2},
   MRCLASS = {14-01},
  MRNUMBER = {4942570},
}

@incollection {cadoret,
    AUTHOR = {Cadoret, Anna},
     TITLE = {Galois categories},
 BOOKTITLE = {Arithmetic and geometry around {G}alois theory},
    SERIES = {Progr. Math.},
    VOLUME = {304},
     PAGES = {171--246},
 PUBLISHER = {Birkh\"{a}user/Springer, Basel},
      YEAR = {2013},
   MRCLASS = {18-02 (14-02 14A20 14F20 18F10)},
  MRNUMBER = {3408165},
MRREVIEWER = {Jorge Picado},
       DOI = {10.1007/978-3-0348-0487-5\_3},
       URL = {https://doi.org/10.1007/978-3-0348-0487-5_3},
}

@book {dixon,
    AUTHOR = {Dixon, John D. and Mortimer, Brian},
     TITLE = {Permutation groups},
    SERIES = {Graduate Texts in Mathematics},
    VOLUME = {163},
 PUBLISHER = {Springer-Verlag, New York},
      YEAR = {1996},
     PAGES = {xii+346},
      ISBN = {0-387-94599-7},
   MRCLASS = {20B05 (20-01 20B07)},
  MRNUMBER = {1409812},
MRREVIEWER = {Martin W. Liebeck},
       DOI = {10.1007/978-1-4612-0731-3},
       URL = {https://doi.org/10.1007/978-1-4612-0731-3},
}

@article {MR4149056,
    AUTHOR = {Fit\'{e}, Francesc and Guitart, Xavier},
     TITLE = {Endomorphism algebras of geometrically split abelian surfaces
              over {$\mathbb{Q}$}},
   JOURNAL = {Algebra Number Theory},
  FJOURNAL = {Algebra \& Number Theory},
    VOLUME = {14},
      YEAR = {2020},
    NUMBER = {6},
     PAGES = {1399--1421},
      ISSN = {1937-0652},
   MRCLASS = {11G18 (11G10 11G15 14K22)},
  MRNUMBER = {4149056},
MRREVIEWER = {Noriko Yui},
       DOI = {10.2140/ant.2020.14.1399},
       URL = {https://doi.org/10.2140/ant.2020.14.1399},
}

@book {Garling,
    AUTHOR = {Garling, David J. H.},
     TITLE = {A course in {G}alois theory},
 PUBLISHER = {Cambridge University Press, Cambridge},
      YEAR = {1986},
     PAGES = {viii+167},
      ISBN = {0-521-32077-1; 0-521-31249-3},
   MRCLASS = {12F10},
  MRNUMBER = {876676},
MRREVIEWER = {Carl Riehm},
}

@book {Liu,
    AUTHOR = {Liu, Qing},
     TITLE = {Algebraic geometry and arithmetic curves},
    SERIES = {Oxford Graduate Texts in Mathematics},
    VOLUME = {6},
      NOTE = {Translated from the French by Reinie Ern\'{e},
              Oxford Science Publications},
 PUBLISHER = {Oxford University Press, Oxford},
      YEAR = {2002},
     PAGES = {xvi+576},
      ISBN = {0-19-850284-2},
   MRCLASS = {14-01 (11G30 14A05 14A15 14Gxx 14Hxx)},
  MRNUMBER = {1917232},
MRREVIEWER = {C\'{\i}cero Carvalho},
}

@article {konstantinou2023,
    AUTHOR = {Konstantinou, Alexandros},
     TITLE = {Brauer relations, isogenies and parities of ranks},
   JOURNAL = {J. Number Theory},
  FJOURNAL = {Journal of Number Theory},
    VOLUME = {278},
      YEAR = {2026},
     PAGES = {482--509},
      ISSN = {0022-314X,1096-1658},
   MRCLASS = {11G30 (11G10 11G20 11G40)},
  MRNUMBER = {4919851},
MRREVIEWER = {John\ T.\ Cullinan},
       DOI = {10.1016/j.jnt.2025.04.016},
       URL = {https://doi.org/10.1016/j.jnt.2025.04.016},
}

@article {MR2551757,
    AUTHOR = {Coates, John and Fukaya, Takako and Kato, Kazuya and Sujatha,
              Ramdorai},
     TITLE = {Root numbers, {S}elmer groups, and non-commutative {I}wasawa
              theory},
   JOURNAL = {J. Algebraic Geom.},
  FJOURNAL = {Journal of Algebraic Geometry},
    VOLUME = {19},
      YEAR = {2010},
    NUMBER = {1},
     PAGES = {19--97},
      ISSN = {1056-3911},
   MRCLASS = {11G40 (11R23)},
  MRNUMBER = {2551757},
MRREVIEWER = {Th\cfac{o}ng Nguy\cftil{e}n-Quang-\Dbar \cftil{o}},
       DOI = {10.1090/S1056-3911-09-00504-9},
       URL = {https://doi.org/10.1090/S1056-3911-09-00504-9},
}

@article {dokchitser2024parityranksjacobianscurves,
    AUTHOR = {Dokchitser, Vladimir and Green, Holly and Konstantinou,
              Alexandros and Morgan, Adam},
     TITLE = {Parity of ranks of {J}acobians of curves},
   JOURNAL = {Proc. Lond. Math. Soc. (3)},
  FJOURNAL = {Proceedings of the London Mathematical Society. Third Series},
    VOLUME = {131},
      YEAR = {2025},
    NUMBER = {3},
     PAGES = {Paper No. e70083, 50},
      ISSN = {0024-6115,1460-244X},
   MRCLASS = {11G30 (11G05 11G10 11G20 11G40 14H40)},
  MRNUMBER = {4962685},
       DOI = {10.1112/plms.70083},
       URL = {https://doi.org/10.1112/plms.70083},
}

@misc{konstantinou2024galoiscoverscurvesarithmetic,
      title={On {G}alois covers of curves and arithmetic of {J}acobians}, 
      author={Alexandros Konstantinou and Adam Morgan},
      year={2024},
      eprint={2407.18258},
      archivePrefix={arXiv},
      primaryClass={math.NT},
      url={https://arxiv.org/abs/2407.18258}, 
}

@misc{OurScripts,
AUTHOR = {Gallese, Andrea},
TITLE = {{Computations with branched covers of genera 2-1}},
YEAR = {2024},
NOTE = {Available at \url{https://github.com/G4ll/genus2covers}}
}

@article {MR3427148,
    AUTHOR = {Kumar, Abhinav},
     TITLE = {Hilbert modular surfaces for square discriminants and elliptic
              subfields of genus 2 function fields},
   JOURNAL = {Res. Math. Sci.},
  FJOURNAL = {Research in the Mathematical Sciences},
    VOLUME = {2},
      YEAR = {2015},
     PAGES = {Art. 24, 46},
      ISSN = {2522-0144},
   MRCLASS = {11F41 (14H40 14J28)},
  MRNUMBER = {3427148},
MRREVIEWER = {Michael M. Schein},
       DOI = {10.1186/s40687-015-0042-9},
       URL = {https://doi.org/10.1186/s40687-015-0042-9},
}

@article {MR3893776,
    AUTHOR = {Franciosi, Marco and Pardini, Rita and Rollenske, S\"{o}nke},
     TITLE = {{$(d,d')$}-elliptic curves of genus two},
   JOURNAL = {Ark. Mat.},
  FJOURNAL = {Arkiv f\"{o}r Matematik},
    VOLUME = {56},
      YEAR = {2018},
    NUMBER = {2},
     PAGES = {299--317},
      ISSN = {0004-2080},
  MRNUMBER = {3893776},
MRREVIEWER = {Valentijn Zo\"{e} Karemaker},
       DOI = {10.4310/ARKIV.2018.v56.n2.a6},
       URL = {https://doi.org/10.4310/ARKIV.2018.v56.n2.a6},
}

@article {MR3173190,
    AUTHOR = {Kani, Ernst},
     TITLE = {{J}acobians isomorphic to a product of two elliptic curves and
              ternary quadratic forms},
   JOURNAL = {J. Number Theory},
  FJOURNAL = {Journal of Number Theory},
    VOLUME = {139},
      YEAR = {2014},
     PAGES = {138--174},
      ISSN = {0022-314X},
   MRCLASS = {14H40 (11G10 11G15 11G18 14H30 14H52)},
  MRNUMBER = {3173190},
MRREVIEWER = {Edward F. Schaefer},
       DOI = {10.1016/j.jnt.2013.12.006},
       URL = {https://doi.org/10.1016/j.jnt.2013.12.006},
}

@book {MR2062673,
    AUTHOR = {Birkenhake, Christina and Lange, Herbert},
     TITLE = {Complex abelian varieties},
    SERIES = {Grundlehren der mathematischen Wissenschaften [Fundamental
              Principles of Mathematical Sciences]},
    VOLUME = {302},
   EDITION = {Second},
 PUBLISHER = {Springer-Verlag, Berlin},
      YEAR = {2004},
     PAGES = {xii+635},
      ISBN = {3-540-20488-1},
   MRCLASS = {14-02 (14H37 14Kxx 32G20)},
  MRNUMBER = {2062673},
MRREVIEWER = {Fumio Hazama},
       DOI = {10.1007/978-3-662-06307-1},
       URL = {https://doi.org/10.1007/978-3-662-06307-1},
}

@article {Raju,
    AUTHOR = {Krishnamoorthy, Raju},
     TITLE = {Correspondences without a core},
   JOURNAL = {Algebra Number Theory},
  FJOURNAL = {Algebra \& Number Theory},
    VOLUME = {12},
      YEAR = {2018},
    NUMBER = {5},
     PAGES = {1173--1214},
      ISSN = {1937-0652},
   MRCLASS = {14G35 (05C25 05C63 14H05 37P55)},
  MRNUMBER = {3840873},
MRREVIEWER = {Pierre A. Lochak},
       DOI = {10.2140/ant.2018.12.1173},
       URL = {https://doi.org/10.2140/ant.2018.12.1173},
}

@article {Joel,
    AUTHOR = {Bella\"{\i}che, Jo\"{e}l},
     TITLE = {On self-correspondences on curves},
   JOURNAL = {Algebra Number Theory},
  FJOURNAL = {Algebra \& Number Theory},
    VOLUME = {17},
      YEAR = {2023},
    NUMBER = {11},
     PAGES = {1867--1899},
      ISSN = {1937-0652},
   MRCLASS = {14A10 (14E05 14F20 37P99)},
  MRNUMBER = {4648850},
MRREVIEWER = {Constantin Shramov},
       DOI = {10.2140/ant.2023.17.1867},
       URL = {https://doi.org/10.2140/ant.2023.17.1867},
}

@article {MR2039100,
    AUTHOR = {Shaska, T.},
     TITLE = {Genus 2 fields with degree 3 elliptic subfields},
   JOURNAL = {Forum Math.},
  FJOURNAL = {Forum Mathematicum},
    VOLUME = {16},
      YEAR = {2004},
    NUMBER = {2},
     PAGES = {263--280},
      ISSN = {0933-7741},
   MRCLASS = {11G30 (14H05)},
  MRNUMBER = {2039100},
MRREVIEWER = {Jordi Gu\`ardia},
       DOI = {10.1515/form.2004.013},
       URL = {https://doi.org/10.1515/form.2004.013},
}

@misc{MilneJacobians,
      title={{Jacobian varieties}}, 
      author={James S. Milne},
      year={2021},
      URL = {https://www.jmilne.org/math/xnotes/JVs.pdf}
}

@article {MR3904148,
    AUTHOR = {Costa, Edgar and Mascot, Nicolas and Sijsling, Jeroen and
              Voight, John},
     TITLE = {Rigorous computation of the endomorphism ring of a {J}acobian},
   JOURNAL = {Math. Comp.},
  FJOURNAL = {Mathematics of Computation},
    VOLUME = {88},
      YEAR = {2019},
    NUMBER = {317},
     PAGES = {1303--1339},
      ISSN = {0025-5718},
   MRCLASS = {11G10 (11Y16 14H40 14K15 14Q05)},
  MRNUMBER = {3904148},
MRREVIEWER = {Matthew Bisatt},
       DOI = {10.1090/mcom/3373},
       URL = {https://doi.org/10.1090/mcom/3373},
}

@article {MR4280568,
    AUTHOR = {Costa, Edgar and Lombardo, Davide and Voight, John},
     TITLE = {Identifying central endomorphisms of an abelian variety via
              {F}robenius endomorphisms},
   JOURNAL = {Res. Number Theory},
  FJOURNAL = {Research in Number Theory},
    VOLUME = {7},
      YEAR = {2021},
    NUMBER = {3},
     PAGES = {Paper No. 46, 14},
      ISSN = {2522-0160},
   MRCLASS = {11G10 (11Y40 14K15)},
  MRNUMBER = {4280568},
       DOI = {10.1007/s40993-021-00264-y},
       URL = {https://doi.org/10.1007/s40993-021-00264-y},
}

@article {MR3882288,
    AUTHOR = {Lombardo, Davide},
     TITLE = {Computing the geometric endomorphism ring of a genus-2
              {J}acobian},
   JOURNAL = {Math. Comp.},
  FJOURNAL = {Mathematics of Computation},
    VOLUME = {88},
      YEAR = {2019},
    NUMBER = {316},
     PAGES = {889--929},
      ISSN = {0025-5718},
   MRCLASS = {11F80 (11G10 11Y99)},
  MRNUMBER = {3882288},
MRREVIEWER = {John L. Boxall},
       DOI = {10.1090/mcom/3358},
       URL = {https://doi.org/10.1090/mcom/3358},
}

@inproceedings{WedderburnDecomposition,
author = {Eberly, W. and Giesbrecht, M.},
title = {Efficient decomposition of associative algebras},
year = {1996},
isbn = {0897917960},
publisher = {Association for Computing Machinery},
address = {New York, NY, USA},
url = {https://doi.org/10.1145/236869.236931},
doi = {10.1145/236869.236931},
booktitle = {Proceedings of the 1996 International Symposium on Symbolic and Algebraic Computation},
pages = {170–178},
numpages = {9},
location = {Zurich, Switzerland},
series = {ISSAC '96}
}

@article{RODRIGUEZ2025364,
title = {Period matrices for abelian varieties: Algorithms and applications},
journal = {Journal of Algebra},
volume = {683},
pages = {364-388},
year = {2025},
issn = {0021-8693},
doi = {https://doi.org/10.1016/j.jalgebra.2025.06.032},
url = {https://www.sciencedirect.com/science/article/pii/S002186932500376X},
author = {Rubí E. Rodríguez and Anita M. Rojas},
keywords = {Abelian and Jacobian varieties, Period matrix, Poincaré and group algebra decompositions},
abstract = {We give an effective procedure to explicitly find the decomposition of a polarized abelian variety into its simple factors if a period matrix is known. Since finding this datum is not easy, we also provide two methods to compute the period matrix for a polarized abelian variety, depending on the given geometric information about it. These results work particularly well in combination with our previous work on abelian varieties with group actions, since they allow us to fully decompose such varieties by successively decomposing their factor subvarieties, even when these no longer have a group action. We highlight that we do not require to determine the full endomorphism algebra of any of the (sub)varieties involved. We illustrate the power of our algorithms with two byproducts: we find a completely decomposable Jacobian variety of dimension 101, filling this Ekedahl-Serre gap, and we describe a new completely decomposable Jacobian variety of CM type of dimension 11.}
}

@article {MR4622408,
    AUTHOR = {Urbanik, David},
     TITLE = {Explicit rational group law on hyperelliptic {J}acobians of
              any genus},
   JOURNAL = {Bull. Pol. Acad. Sci. Math.},
  FJOURNAL = {Bulletin of the Polish Academy of Sciences. Mathematics},
    VOLUME = {71},
      YEAR = {2023},
    NUMBER = {1},
     PAGES = {35--52},
      ISSN = {0239-7269},
   MRCLASS = {14H40 (14K05)},
  MRNUMBER = {4622408},
MRREVIEWER = {Johannes Horn},
       DOI = {10.4064/ba230330-7-7},
       URL = {https://doi.org/10.4064/ba230330-7-7},
}

@book {MR2514094,
    AUTHOR = {Silverman, Joseph H.},
     TITLE = {The arithmetic of elliptic curves},
    SERIES = {Graduate Texts in Mathematics},
    VOLUME = {106},
   EDITION = {Second},
 PUBLISHER = {Springer, Dordrecht},
      YEAR = {2009},
     PAGES = {xx+513},
      ISBN = {978-0-387-09493-9},
   MRCLASS = {11-02 (11G05 11G20 14H52 14K15)},
  MRNUMBER = {2514094},
MRREVIEWER = {Vasil\cprime  \={I}. Andr\={\i}\u{\i}chuk},
       DOI = {10.1007/978-0-387-09494-6},
       URL = {https://doi.org/10.1007/978-0-387-09494-6},
}

@article {MR4932442,
    AUTHOR = {Djukanovi\'{c}, Martin},
     TITLE = {Families of split {J}acobians with isogenous components},
   JOURNAL = {J. Th\'{e}or. Nombres Bordeaux},
  FJOURNAL = {Journal de Th\'{e}orie des Nombres de Bordeaux},
    VOLUME = {37},
      YEAR = {2025},
    NUMBER = {1},
     PAGES = {49--77},
      ISSN = {1246-7405,2118-8572},
   MRCLASS = {14H40 (11G10 14H10 14H52 14K02)},
  MRNUMBER = {4932442},
}

@article{Kani1997,
author = {Kani, Ernst},
journal = {Journal für die reine und angewandte Mathematik},
keywords = {elliptic differentials; curve of genus 2 defined over an algebraically closed field; elliptic subcover},
pages = {93-122},
title = {The number of curves of genus two with elliptic differentials.},
url = {http://eudml.org/doc/183547},
volume = {485},
year = {1997},
}

@article {MR1217345,
    AUTHOR = {Masser, David and W\"{u}stholz, Gisbert},
     TITLE = {Isogeny estimates for abelian varieties, and finiteness
              theorems},
   JOURNAL = {Ann. of Math. (2)},
  FJOURNAL = {Annals of Mathematics. Second Series},
    VOLUME = {137},
      YEAR = {1993},
    NUMBER = {3},
     PAGES = {459--472},
      ISSN = {0003-486X,1939-8980},
   MRCLASS = {11G10 (14K02)},
  MRNUMBER = {1217345},
       DOI = {10.2307/2946529},
       URL = {https://doi.org/10.2307/2946529},
}

@book {MR770932,
    AUTHOR = {Arbarello, E. and Cornalba, M. and Griffiths, P. A. and
              Harris, J.},
     TITLE = {Geometry of algebraic curves. {V}ol. {I}},
    SERIES = {Grundlehren der mathematischen Wissenschaften [Fundamental
              Principles of Mathematical Sciences]},
    VOLUME = {267},
 PUBLISHER = {Springer-Verlag, New York},
      YEAR = {1985},
     PAGES = {xvi+386},
      ISBN = {0-387-90997-4},
   MRCLASS = {14Hxx (14-02)},
  MRNUMBER = {770932},
MRREVIEWER = {Werner\ Kleinert},
       DOI = {10.1007/978-1-4757-5323-3},
       URL = {https://doi.org/10.1007/978-1-4757-5323-3},
}

@misc{bakker2015freymazurconjecturelowgenus,
      title={On the Frey-Mazur conjecture over low genus curves}, 
      author={Benjamin Bakker and Jacob Tsimerman},
      year={2015},
      eprint={1309.6568},
      archivePrefix={arXiv},
      primaryClass={math.AG},
      url={https://arxiv.org/abs/1309.6568}, 
}

@misc{gallese2026complement,
  author    = {Gallese, Andrea and Lombardo, Davide and Naccarato, Francesco and Zannier, Umberto},
  title     = {Finding the Complement of an Elliptic Curve Inside a {Jacobian}},
  year      = {2026},
  eprint    = {XXXX.XXXXX},
  archivePrefix = {arXiv},
  primaryClass  = {math.AG},
}
\end{document}